\definecolor{Darkgreen}{rgb}{0,0.4,0}
\def\laweq{\overset{\mathrm{law}}=}
\def\ind{\mathop{\rm ind}\nolimits}
\def\dotcup{\mathbin{\dot\cup}}
\def\d{\mathrm{d}}
\def\bbone{\boldsymbol 1 }
\def\<{\langle}
\def\>{\rangle}
\def\mindex#1{\emph{#1}}
\theoremstyle{plain}
\newtheorem{theorem}{Theorem}[section]
\newtheorem{lemma}[theorem]{Lemma}
\newtheorem{corollary}[theorem]{Corollary}
\newtheorem{proposition}[theorem]{Proposition}
\theoremstyle{definition}
\newtheorem{definition}[theorem]{Definition}
\newtheorem{remark}[theorem]{Remark}
\newtheorem{example}[theorem]{Example}
\theoremstyle{remark}
\title{Markovian dynamics of exchangeable arrays}
  \author{Jiří Černý}
  \address{
    Department of Mathematics and Computer Science\\
    University of Basel\\
    Spiegelgasse 1\\
    4051 Basel, Switzerland\\
    jiri.cerny@unibas.ch}
  \author{Anton Klimovsky}
  \address{
    Fakultät für Mathematik\\
    Universität Duisburg-Essen\\
    Thea-Leymann-Str.~9\\
    45117 Essen, Germany\\
    anton.klymovskiy@uni-due.de}
\begin{document}

\begin{abstract}
  We study Markov processes with values in the space of general
  two-dimensional arrays whose distribution is exchangeable. The results
  of this paper are inspired by the theory of exchangeable dynamical
  random graphs developed by H.~Crane in \cite{Cra16,Cra17}.
\end{abstract}

\maketitle

\section{Introduction}
\label{s:intro}

The goal of this paper is to survey some of the recent results on the
Markovian dynamics of exchangeable random graphs due to Harry Crane
\cite{Cra16,Cra17} and generalise them to the context of dynamics
of exchangeable random arrays whose entries do not necessarily take
values in a finite set.

The paper extends the results presented by the authors in the learning
session ``Genealogies of particles on dynamic random networks'' during
the Programme ``Genealogies of Interacting Particle Systems'' of Institute
for Mathematical Sciences in August 2017. The learning session
concentrated on various aspects of the dynamics of random graphs and,
in particular, of particle systems on such graphs. While the original
theory due to H.~Crane cannot be applied directly in this context,  the
results of this paper could be relevant, e.g., for exchangeable Markovian
dynamics of particle systems on weighted exchangeable dynamical random
graphs.

Our results closely follow \cite{Cra16,Cra17}. However, as we cannot use
the fact that the entries of the array take values in a finite space,
some of the proofs require non-trivial modifications, which, in our
opinion, sometimes make them cleaner. Many of the results of
\cite{Cra16,Cra17} depend very strongly on the finiteness of the entry
space and cannot be proved easily in the general context. Those are
omitted here.

\smallskip\noindent\textbf{Acknowledgements.} The work was done partially while the
authors were visiting the Institute for Mathematical Sciences, National
University of Singapore in 2017. The visit was supported by the
Institute. AK's visit was also partially supported by the German Academic
Exchange Service (DAAD).

\section{Exchangeable random arrays}
\label{s:excharrays}

We will consider arrays with values in an arbitrary  Polish space $S$.
This space will be endowed with its Borel $\sigma $-field $\mathcal B(S)$
and a compatible metric $d_S$, which we assume to be bounded by $1$. We
write $\mathcal P(S)$ for the set of all probability measures on
$(S,\mathcal B(S))$ endowed with the topology of weak convergence, which
is a Polish space as well.

A random $S$-valued array is a collection $Y=(Y_{ij})_{ij\in\mathbb N}$
of $S$-valued random variables on some probability space
$(\Omega ,\mathcal A, P)$. Otherwise said, $Y$ is
$\mathbb S:=S^{\mathbb N^2}$-valued random variable. We endow $\mathbb S$
with the product topology and the compatible metric
$d_{\mathbb S}(y,y')=\sum_{i,j\in \mathbb N} 2^{-i-j} d_S(y_{ij},y'_{ij})$.

For an arbitrary set $A\subset \mathbb N$, we define
$Y|_{A}= (Y_{ij})_{i,j\in A}$ to be the restriction of $Y$ to the index set
$A$. In particular, with $[n]:=\{1,\dots,n\}$, $Y|_{[n]}$ is its
restriction to the first $n$ coordinates,
$Y|_{[n]}\in \mathbb S_n:=S^{n^2}$.

Similarly, for every probability distribution $\nu$ on $\mathbb S$ (or on
  $\mathbb S_m$, $m\ge n$), we denote by $\nu|_{[n]}$ its image under the
canonical restriction from $\mathbb S$ (or $\mathbb S_m$) to $\mathbb S_n$.
It is a known fact
that a sequence of probability measures $(\mu^k)_{k\ge 1}$ on  $\mathbb S$
converges weakly to $\mu \in \mathcal P(\mathbb S)$, iff all restrictions
$\mu^k|_{[n]}\in \mathcal P(\mathbb S_n)$,  converge weakly in
$\mathcal P(\mathbb S_n)$, or equivalently $\mu^k(f)\to \mu(f)$, for
every bounded continuous cylinder function $f$ on $\mathbb S$.

Let $\Sigma $ be the set of all permutations of integers, that is the set
of all bijections of $\mathbb N$ which fix all but finitely many values;
$\Sigma_n$ denotes the set of all permutations of $[n]$. For an  array $Y$
and $\boldsymbol \pi = (\pi_1 ,\pi_2)\in \Sigma^2 $, we define a new array
$Y^{\boldsymbol \pi }$ by
$Y^{\boldsymbol \pi}_{ij} = Y_{\pi_1(i)\pi_2(j)}$. For $\pi \in \Sigma $,
we also define $Y^\pi$ by $Y^\pi_{ij}= Y_{\pi(i) \pi(j)}$. An
array $Y$ is called \mindex{exchangeable} if
\begin{equation}
  \label{e:exar}
  Y \laweq Y^{\boldsymbol \pi},
  \qquad \text{for every $\boldsymbol \pi \in \Sigma^2$}.
\end{equation}
An array $Y$ is called \mindex{weakly exchangeable}%
\footnote{The terminology is slightly misleading:
  due to the symmetry requirement, the weak exchangeability is not  weaker
  than the exchangeability}
if it is symmetric (i.e., $Y_{ij}=Y_{ji}$)
and
\begin{equation}
  \label{e:wexar}
  Y \laweq Y^{\pi}, \qquad \text{for every $\pi \in \Sigma$.}
\end{equation}

The key result of the theory of random arrays is their characterisation
due to Aldous \cite{Ald81} and Hoover \cite{Hoo79} which can be viewed as
a ``two-dimensional version'' of de Finetti's theorem.

\begin{theorem}
  \label{t:AH}
  \textup{(a)} If $(Y_{ij})_{i,j\in \mathbb N}$ is an $S$-valued exchangeable
  array, then there exists a measurable function $f\colon[0,1]^4\to S$ such
  that $Y\laweq Y^\star$, where
  \begin{equation}
    Y^\star_{ij}= f(U, U_i, U'_j, U_{ij}),
  \end{equation}
  and $U, (U_i)_{i\in \mathbb N}$, $(U'_i)_{i\in \mathbb N}$, and
  $(U_{ij})_{i,j\in \mathbb N}$ are independent collections of
  $\mathrm{Uniform}([0,1])$ i.i.d.~random variables.

  \textup{(b)} If $(Y_{ij})_{i,j\in \mathbb N}$ is an $S$-valued weakly
  exchangeable array, then the analogous statement holds with a function
  $f\colon[0,1]^4\to S$ satisfying $f(\cdot,x,y,\cdot)=f(\cdot,y,x,\cdot)$, and
  with
  \begin{equation}
    Y^\star_{ij} = Y^\star _{ji}= f(U, U_i, U_j, U_{ij}), \qquad i\ge j.
  \end{equation}
\end{theorem}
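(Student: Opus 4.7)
The plan is to reduce, via the Borel isomorphism theorem, to the case $S=[0,1]$, then identify the conditional-independence structure forced by exchangeability, and finally invoke a standard randomisation lemma to produce the uniform building blocks $U,(U_i),(U'_j),(U_{ij})$.

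As a first step I would use that $(S,\mathcal B(S))$ is standard Borel to fix a Borel isomorphism $\varphi\colon S\to B$ for some Borel $B\subseteq[0,1]$. The array $\widetilde Y_{ij}:=\varphi(Y_{ij})$ is again (weakly) exchangeable, and once a representation $\widetilde Y\laweq f_0(U,U_i,U'_j,U_{ij})$ has been obtained with $f_0\colon[0,1]^4\to[0,1]$, the desired $f:=\varphi^{-1}\circ f_0$ works after modifying $f_0$ to land in $B$ off a Lebesgue-null set, which is legitimate since $\widetilde Y_{ij}\in B$ a.s. This step also explains why Polishness (rather than finiteness, as in \cite{Cra16,Cra17}) is the natural level of generality.

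The core of the argument is to exhibit three $\sigma$-algebras: a ``global'' one $\mathcal G$, a row family $(\mathcal R_i)_i$, and a column family $(\mathcal C_j)_j$, such that $\mathcal G$ is a.s.~invariant under every $\boldsymbol\pi\in\Sigma^2$, $\mathcal R_i$ is invariant under permutations fixing $i$ in the first coordinate, $\mathcal C_j$ is invariant under those fixing $j$ in the second, and, crucially, conditional on $\mathcal G\vee\mathcal R_i\vee\mathcal C_j$ the entry $Y_{ij}$ is independent of the rest of the array together with the remaining row and column $\sigma$-algebras. Concretely, I would build these as tail/ergodic $\sigma$-algebras of empirical measures: $\mathcal G$ from empirical measures of the whole array, $\mathcal R_i$ from row-$i$ empirical measures (symmetrised over the column index via Cesàro limits) and similarly for $\mathcal C_j$. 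The conditional independence would then follow from a reverse-martingale argument combined with the Hewitt--Savage zero--one law applied to the relevant subgroups of $\Sigma^2$. This is the step I expect to be the main obstacle, since it is where the purely combinatorial symmetry \eqref{e:exar} is turned into the probabilistic independence statement that drives the representation.

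With the conditional-independence structure in hand, I would invoke the standard randomisation principle: any countably generated sub-$\sigma$-algebra of a standard probability space can be generated by a single Uniform$[0,1]$ variable, and exchangeability of the defining empirical functionals ensures that the generators of $\mathcal G$, $(\mathcal R_i)_i$, $(\mathcal C_j)_j$ can be chosen as i.i.d.~Uniform$[0,1]$ variables $U,(U_i),(U'_j)$, mutually independent. The regular conditional law of $Y_{ij}$ given these variables is a measurable kernel $K(u,u_i,u'_j;\cdot)$ on $[0,1]$. Enlarging the probability space by an independent i.i.d.~Uniform$[0,1]$ collection $(U_{ij})$ and setting $f(u,v,w,t):=F_{u,v,w}^{-1}(t)$, where $F_{u,v,w}$ is the CDF of $K(u,v,w;\cdot)$, gives $f(U,U_i,U'_j,U_{ij})\laweq Y_{ij}$ entrywise; the conditional independence across $(i,j)$ then upgrades this to equality in law of the whole arrays, proving (a). For (b), the additional symmetry constraints force $\mathcal R_i=\mathcal C_i$, so the generators can be chosen with $U_i=U'_i$, and the symmetry $f(\cdot,x,y,\cdot)=f(\cdot,y,x,\cdot)$ is inherited from $Y_{ij}=Y_{ji}$.
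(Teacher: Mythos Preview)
The paper does not prove this theorem at all. Theorem~\ref{t:AH} is stated as the classical Aldous--Hoover representation, attributed to \cite{Ald81} and \cite{Hoo79}, and is then used as a black box throughout; no argument is given in the text. So there is no ``paper's own proof'' to compare your proposal against.

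Your sketch is a reasonable outline of one standard route to Aldous--Hoover: reduce to $S=[0,1]$ by Borel isomorphism, identify the ergodic/tail $\sigma$-fields at the global, row, and column level, prove the key conditional independence via reverse martingales and Hewitt--Savage, and then code each level by a Uniform$[0,1]$ variable using the usual randomisation lemma. This is in the spirit of the arguments in Aldous's Saint-Flour notes \cite{Ald85} and Kallenberg's treatment. Be aware that the step you flag as ``the main obstacle'' really is the heart of the matter: making precise the $\sigma$-fields $\mathcal G,\mathcal R_i,\mathcal C_j$ and proving the conditional independence of $Y_{ij}$ from the rest given $\mathcal G\vee\mathcal R_i\vee\mathcal C_j$ requires substantial work (shell $\sigma$-fields, careful bookkeeping of which subgroup of $\Sigma^2$ is acting, and a genuine coupling/coding argument to obtain i.i.d.\ uniforms rather than merely some generators). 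Your outline names the right ingredients but does not yet supply that work; as written it is a plan, not a proof. For the purposes of this paper, however, a citation suffices, and that is exactly what the authors do.
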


The representing function $f$ of the Aldous-Hoover theorem is not uniquely
determined.
E.g., in the case (a), if two functions $f$ and $f'$ satisfy
$f'(a,b,c,d)=f(T_1(a),T_2(b),T_3(c),T_4(d))$ for some measure preserving
transformations $T_1,\dots,T_4$ of $[0,1]$, then the corresponding
exchangeable arrays have the same distribution.

A (weakly) exchangeable array is called \mindex{dissociated} if
\begin{equation}
  (Y_{ij}:i,j\le n) \text{ is independent of }
  (Y_{ij}:i,j>n), \text{ for each $n$.}
\end{equation}
It is obvious that if the function $f$ in the representation of
Theorem~\ref{t:AH} does not depend on the first coordinate, then $Y$ is
dissociated. Converse statement hold as well, see
Corollary~14.13 in \cite{Ald85}.

Dissociated arrays play a similar role as i.i.d.~sequences do in the
theory of exchangeable sequences: Every (weakly) exchangeable array is a
mixture of (weakly) exchangeable dissociated arrays. To state this more
formally, we need more definitions.

A set $A\in \mathcal B( \mathbb S)$ is called \textit{exchangeable}
\index{exchangeable set} if $A=A^{\boldsymbol \pi}$ for every
$\boldsymbol \pi\in \Sigma^2 $, where
$A^{\boldsymbol \pi}=\{y^{\boldsymbol \pi}:y\in A\}$ and
$y^{\boldsymbol \pi}_{ij} = y_{\pi_1(i)\pi_2(j)}$. The collection
$\mathcal E_S\subset \mathcal B(\mathbb S)$ of all exchangeable sets is
called the \mindex{exchangeable $\sigma$-field}. For an exchangeable
array $Y$, we define
$\mathcal E_Y=\{Y^{-1}(A): A\in \mathcal E_S\}$. We use
$\mathcal D_S \subset \mathcal P(\mathbb S)$ to denote the set of all
distributions of dissociated exchangeable arrays, which is a closed
subset of $\mathcal P(\mathbb S)$. We write $\tilde{\mathcal D}_S$ for
the set of all distributions of dissociated weakly exchangeable arrays.

The following proposition follows from~Proposition 14.8 and Theorem~12.10
of \cite{Ald85}.

\begin{proposition}
  \label{p:mixture}
  \textup{(a)} A (weakly) exchangeable array $Y$ is dissociated iff $P(A)\in \{0,1\}$
  for every $A\in \mathcal E_Y$, that is its exchangeable $\sigma $-field
  is $P$-trivial.

  \textup{(b)} Let $Y$ be a (weakly) exchangeable array and $\alpha $ its regular
  conditional distribution given $\mathcal E_Y$. Then,
  $\alpha (\omega )\in \mathcal D_S$
  (resp.~$\alpha (\omega )\in \tilde{\mathcal D}_S$) for $P$-a.e.~$\omega $. Moreover,
  the distribution
  $\mu_Y$ of $Y$ can be written as
  \begin{equation}
    \label{eq:mixture}
    \mu_Y(\cdot)=\int_{\mathcal D_S}\nu(\cdot) \Lambda_Y( \d \nu )
  \end{equation}
  for a uniquely determined probability measure $\Lambda_Y$ on
  $\mathcal D_S$ (resp.~$\tilde{\mathcal D}_S$).
\end{proposition}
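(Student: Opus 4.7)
The plan is to derive~(a) via a Hewitt--Savage style $0$--$1$ argument in one direction and the Aldous--Hoover representation in the other, and then to obtain~(b) from~(a) via the regular conditional distribution of $Y$ given $\mathcal E_Y$. For the \emph{dissociated $\Rightarrow$ trivial} half of~(a), fix $A_0\in\mathcal E_S$ and set $A=Y^{-1}(A_0)$. Given $\varepsilon>0$, approximate $A_0$ in $\mu_Y$ by a cylinder set $C\subset\mathbb S$ depending only on coordinates $(i,j)\in[n]^2$ so that $\mu_Y(A_0\triangle C)<\varepsilon$; this is possible by the cylinder-function characterisation of weak convergence on $\mathbb S$ recalled above. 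Choose $\boldsymbol\pi=(\pi,\pi)\in\Sigma^2$ with $\pi$ an involution swapping $[n]$ and $\{n+1,\dots,2n\}$; then $C^{\boldsymbol\pi}$ depends only on the coordinates in $\{n+1,\dots,2n\}^2$, so $\{Y\in C\}$ and $\{Y\in C^{\boldsymbol\pi}\}$ are $P$-independent by dissociation. Since $A_0$ is exchangeable, $A_0\triangle C^{\boldsymbol\pi}=(A_0\triangle C)^{\boldsymbol\pi}$, and exchangeability of $\mu_Y$ yields $\mu_Y(A_0\triangle C^{\boldsymbol\pi})=\mu_Y(A_0\triangle C)<\varepsilon$. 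Combining,
\[ P(A)=\mu_Y(A_0\cap A_0)=\mu_Y(C\cap C^{\boldsymbol\pi})+O(\varepsilon)=\mu_Y(C)^2+O(\varepsilon)=P(A)^2+O(\varepsilon),\]
and sending $\varepsilon\to 0$ forces $P(A)\in\{0,1\}$.

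The converse direction of~(a) (\emph{trivial $\Rightarrow$ dissociated}) is a consequence of Theorem~\ref{t:AH} combined with the converse of the remark preceding Proposition~\ref{p:mixture} (Corollary~14.13 of~\cite{Ald85}). Concretely, write $Y\laweq Y^\star=f(U,U_i,U'_j,U_{ij})$; the variable $U$ is permutation-invariant and so generates a sub-$\sigma$-field of $\mathcal E_{Y^\star}$, hence triviality of $\mathcal E_Y=\mathcal E_{Y^\star}$ makes $U$ a.s.\ constant, which allows $f$ to be chosen independent of its first argument and therefore $Y$ to be dissociated. The weakly exchangeable case is identical modulo the symmetry constraint on $f$.

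For~(b), let $\alpha(\omega)$ be a regular conditional distribution of $Y$ given $\mathcal E_Y$; it exists because $\mathbb S$ is Polish. Two properties are needed. First, $\alpha(\omega)$ is a.s.\ (weakly) exchangeable: using $Y\laweq Y^{\boldsymbol\pi}$ together with the identity $(Y^{\boldsymbol\pi})^{-1}(A_0)=Y^{-1}(A_0^{\boldsymbol\pi^{-1}})$ and the $\mathcal E_Y$-measurability of conditional probabilities, one checks $\alpha(\omega)(A_0^{\boldsymbol\pi})=\alpha(\omega)(A_0)$ on a countable generating family and extends. Second, $\alpha(\omega)\in\mathcal D_S$ (resp.\ $\tilde{\mathcal D}_S$) for a.e.\ $\omega$: for every $A_0\in\mathcal E_S$, $\alpha(\omega)(A_0)=\bbone_{Y^{-1}(A_0)}(\omega)\in\{0,1\}$, and countable generation of $\mathcal E_S$ promotes this to a.s.\ triviality of the exchangeable $\sigma$-field under $\alpha(\omega)$, so the part of~(a) just proved applies. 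The push-forward of $P$ along $\omega\mapsto\alpha(\omega)$ then yields $\Lambda_Y$ and~\eqref{eq:mixture}.

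Uniqueness of $\Lambda_Y$ is the subtlest point. The cleanest route is to note that~(a) identifies $\mathcal D_S$ (resp.\ $\tilde{\mathcal D}_S$) with the extreme points of the convex set of (weakly) exchangeable laws on $\mathbb S$ and to invoke Choquet uniqueness on this simplex; alternatively, any disintegration $\mu_Y=\int\nu(\cdot)\,\Lambda(\d\nu)$ concentrated on $\mathcal D_S$ must coincide with conditioning on $\mathcal E_Y$ up to $\mu_Y$-null sets, which determines $\Lambda$. The main technical hurdles I anticipate are the ``for all $A_0\in\mathcal E_S$ simultaneously'' almost-sure statements -- handled via a countable generating subclass and monotone-class arguments -- and the Borel-measurability of $\omega\mapsto\alpha(\omega)\in\mathcal D_S$, which is needed for $\Lambda_Y$ to be a bona fide Borel probability measure on $\mathcal D_S$.
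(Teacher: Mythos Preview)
The paper does not actually prove this proposition; it simply records that it follows from Proposition~14.8 and Theorem~12.10 of \cite{Ald85}. Your proposal is therefore an independent argument, and most of it is sound: the Hewitt--Savage type computation for ``dissociated $\Rightarrow$ $\mathcal E_Y$ trivial'' is correct, and your outline of~(b) (exchangeability of $\alpha(\omega)$, indicator identity $\alpha(\omega)(A_0)=\bbone_{Y^{-1}(A_0)}(\omega)$, and the Choquet-style uniqueness) is the standard route.

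There is, however, a genuine gap in the converse direction of~(a). You write that in the Aldous--Hoover representation $Y^\star=f(U,U_i,U'_j,U_{ij})$ the variable $U$ ``is permutation-invariant and so generates a sub-$\sigma$-field of $\mathcal E_{Y^\star}$''. This is not correct with the paper's definition $\mathcal E_{Y^\star}=\{(Y^\star)^{-1}(A):A\in\mathcal E_S\}\subset\sigma(Y^\star)$: the global variable $U$ is in general \emph{not} $\sigma(Y^\star)$-measurable (the representing function $f$ is far from unique, and different values of $U$ may yield the same array law), so $\sigma(U)\not\subset\mathcal E_{Y^\star}$ and triviality of $\mathcal E_{Y^\star}$ does not directly force $U$ to be a.s.\ constant. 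What \emph{is} true is the reverse inclusion $\mathcal E_{Y^\star}\subset\sigma(U)$ mod null sets, obtained by applying your first implication conditionally on $U$. To close the argument you need to show that the conditional law of $Y^\star|_{[n]}$ given $U$ is $\mathcal E_{Y^\star}$-measurable; this is precisely the law-of-large-numbers identification $|Y^\star|=\Phi(U)$ that the paper establishes only afterwards, in Proposition~\ref{p:dislimits}. So either you must import that LLN step here, or argue directly (as Aldous does) via reverse-martingale convergence for the averages $t^{Y,n}_m$.

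A second, smaller issue: you repeatedly appeal to a ``countable generating subclass'' of $\mathcal E_S$ to upgrade per-$A_0$ statements to simultaneous almost-sure statements. Invariant $\sigma$-fields of this type are typically \emph{not} countably generated (compare tail $\sigma$-fields). The usual fix is to work instead with the countably generated $\sigma$-field $\sigma(|y|)$ and show it coincides with $\mathcal E_S$ up to null sets --- but that again relies on Proposition~\ref{p:dislimits}. You flag this as a ``technical hurdle'', which is fair, but be aware that it is not resolved by a routine monotone-class argument.
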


An important feature of exchangeable arrays is that
regular conditional distribution $\alpha $ of $Y$ given
$\mathcal E_Y$ can, a.s., be recovered from a realisation of $Y$ by the following
procedure. For $m\ge n$,
and $y\in \mathbb S$,
let $t^{y,n}_m \in \mathcal P(\mathbb S_n)$ be defined by
\begin{equation}
  t^{y,n}_m = \frac 1 {((m)_n)^2} \sum_{\psi_1,\psi_2}
  \delta_{(y_{\psi_1(i),\psi_2(j)})_{i,j\in[n]}},
\end{equation}
where the sum runs over all injections $\psi_1,\psi_2\colon [n]\to[m]$ and
\begin{align}
(m)_n = m {(m-1)}\dots (m-n+1).
\end{align}
Measure $t^{y,n}_m$ can be viewed as the empirical
distribution of $n\times n$ sub-arrays in the array $y|_{[m]}$.
We further define
\begin{equation}
  \label{e:tn}
  t^{y,n} = \lim_{m\to \infty} t^{y,n}_m
\end{equation}
whenever this limit exists in the weak sense, and set
$|y|=(t^{y,n})_{n\ge 1}$  whenever all $t^{y,n}$, $n\ge 1$, exist.

It follows from the construction that the probability
measures $t^{y,n}_m$, $n=1,\dots,m$, are consistent in the sense that
$t^{y,n}_m|_{[n-1]} = t^{y,n-1}_m$ for every $2\le n\le m$. This
consistence transfers to the limit, that is
\begin{equation}
  \label{e:consistency}
  t^{y,n}|_{[n-1]} = t^{y,n-1}, \qquad \text{for every $n\ge 2$. }
\end{equation}
Therefore, in view of Kolmogorov's extension theorem,
$|y|$, when it exists, can be viewed as an element of $\mathcal P(\mathbb S)$.

In the weakly exchangeable case, we set
$\tilde t^{y,n}_m$ by
\begin{equation}
  \label{e:tnweak}
  \tilde t^{y,n}_m = \frac 1 {(m)_n} \sum_{\psi}
  \delta_{(y_{\psi(i),\psi(j)})_{i,j\in[n]}},
\end{equation}
where the sum runs over all injections $\pi $ from $[n]$ to $[m]$.
We then define $\tilde t^{y,n}$ and $|y| = (\tilde t^{y,n})_{n\ge 1}$
analogously as in the exchangeable case.

The next proposition establishes the connection between $|Y|$ and its
regular conditional distribution $\alpha$.

\begin{proposition}
  \label{p:dislimits}
  \textup{(a)} Let $Y$ be a (weakly) exchangeable array and $\alpha $ its regular
  conditional distribution given $\mathcal E_Y$. Then, for
  $P$-a.e.~$\omega $, $|Y(\omega )|$ exists and equals to
  $\alpha (\omega )$.
  In particular, $|Y(\omega )|\in \mathcal D_S$
  (resp.~$|Y(\omega )|\in \tilde {\mathcal D}_S$), $P$-a.s.

  \textup{(b)} If $Y$ is dissociated, then $|Y|$ exists a.s.~and
  coincides with the distribution of $Y$.
\end{proposition}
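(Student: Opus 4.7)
The plan is to identify $t^{Y,n}_m(f)$ as the reverse martingale $E[f(Y|_{[n]})\mid\mathcal G_m]$ for an appropriate decreasing filtration $(\mathcal G_m)_{m\ge n}$, and then apply Lévy's downward theorem. This yields (a) directly and (b) as an immediate corollary via Proposition~\ref{p:mixture}(a). The exchangeable case is treated here; the weakly exchangeable case is entirely analogous, with $\tilde t^{Y,n}_m$ and a single permutation $\pi$ replacing $t^{Y,n}_m$ and the pair $(\pi_1,\pi_2)$.

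Fix $n\ge 1$ and a bounded continuous cylinder function $f$ on $\mathbb S_n$. Let $\mathcal G_m$ denote the sub-$\sigma$-field of $\mathcal A$ generated by the $\Sigma_m^2$-invariant measurable functions of $Y$, where each $\pi\in\Sigma_m$ is extended to $\Sigma$ by the identity on $[m]^c$. The filtration $(\mathcal G_m)_{m\ge n}$ is decreasing, and a direct change of variables in the defining sum shows that $t^{Y,n}_m(f)$ is $\Sigma_m^2$-invariant, hence $\mathcal G_m$-measurable. The key identity is
\[
  E\bigl[f(Y|_{[n]})\,\big|\,\mathcal G_m\bigr]=t^{Y,n}_m(f)\qquad P\text{-a.s.}
\]
It follows by testing against a bounded $\mathcal G_m$-measurable $g$: using $\Sigma_m^2$-invariance of $g$ and exchangeability of $Y$ one obtains $E\bigl[g\cdot f\bigl((Y_{\pi_1(i)\pi_2(j)})_{i,j\in[n]}\bigr)\bigr]=E[g\cdot f(Y|_{[n]})]$ for every $\pi_1,\pi_2\in\Sigma_m$, and averaging over all such pairs produces exactly $t^{Y,n}_m(f)$ on the right.

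By Lévy's downward convergence theorem, $t^{Y,n}_m(f)\to L_{n,f}$ $P$-a.s.\ and in $L^1$, with $L_{n,f}$ measurable with respect to $\bigcap_{m\ge n}\mathcal G_m$. For each $\boldsymbol\pi\in\Sigma^2$, all $m$ large enough (depending on $\boldsymbol\pi$) satisfy $t^{Y,n}_m(f)=t^{Y^{\boldsymbol\pi},n}_m(f)$, so $L_{n,f}$ is $\Sigma^2$-invariant in $Y$, i.e.\ $\mathcal E_Y$-measurable. Combined with the obvious inclusion $\mathcal E_Y\subseteq\bigcap_m\mathcal G_m$, this identifies $L_{n,f}=E[f(Y|_{[n]})\mid\mathcal E_Y]=\alpha(\cdot)|_{[n]}(f)$ $P$-a.s. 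Intersecting the resulting a.s.\ events over $n$ and over a countable convergence-determining family of $f$'s in $C_b(\mathbb S_n)$, and invoking the consistency~\eqref{e:consistency}, yields $|Y(\omega)|=\alpha(\omega)$ $P$-a.s., which is (a). Part~(b) follows at once from Proposition~\ref{p:mixture}(a): when $Y$ is dissociated, $\mathcal E_Y$ is $P$-trivial and hence $\alpha(\omega)=\mu_Y$ a.s.

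The main technical point is the reverse martingale identity above, whose proof combines the exchangeability of $Y$ with the $\Sigma_m^2$-invariance of the test functions. A secondary point is the identification of $L_{n,f}$ with $E[\cdot\mid\mathcal E_Y]$ rather than merely with $E[\cdot\mid\bigcap_m\mathcal G_m]$, which the exchangeability argument above handles. Once both are in place the remaining work is routine, and this route bypasses any need for variance estimates and Borel--Cantelli interpolation along subsequences.
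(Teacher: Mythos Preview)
Your proof is correct and takes a genuinely different route from the paper's. The paper first establishes part~(b) in the dissociated case by a direct law-of-large-numbers argument (using that $Y_{ij}$ and $Y_{i'j'}$ are independent once $i\neq i'$, $j\neq j'$) together with a Varadarajan-type upgrade from convergence for each test function to joint weak convergence; it then deduces~(a) by conditioning on $\mathcal E_Y$ and invoking Proposition~\ref{p:mixture}. You invert the logic: you prove~(a) directly via the reverse martingale identity $t^{Y,n}_m(f)=E[f(Y|_{[n]})\mid\mathcal G_m]$ and Lévy's downward theorem, and then read off~(b) from the $P$-triviality of $\mathcal E_Y$ in the dissociated case. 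Your approach is the classical exchangeability argument (the same mechanism that yields the martingale proof of de~Finetti's theorem) and is arguably cleaner: it avoids variance computations and the Varadarajan step, and it handles the identification of the limit with $\alpha$ in one stroke rather than through a two-stage reduction. The paper's route, by contrast, is more hands-on and makes the role of dissociation (independence of disjoint blocks) explicit, which is pedagogically useful but leaves more details to the reader. Both arguments rely on the same countable-intersection trick to pass from individual test functions to weak convergence.
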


\begin{proof}
  We only sketch the argument. We assume first that the law of $Y$ is
  dissociated and denote it by $\alpha \in \mathcal D_S$. In this case,
  we should show that, $P$-a.s., for all $n\ge 1$,
  $\lim_{m\to\infty}t^{Y,n}_m = \alpha |_{[n]}$ weakly. Let
  $f\colon \mathbb S_n \to \mathbb R$ be a bounded continuous function.
  Then,
  \begin{equation}
    t^{Y,n}_m(f)= \frac 1 {((m)_n)^2} \sum_{\psi_1,\psi_2}
    f\big((Y_{\psi_1(i),\psi_2(j)})_{i,j\in[n]}\big).
  \end{equation}
  Using the fact that $Y$ is dissociated and thus $Y_{ij}$ is independent
  of $Y_{i'j'}$ when $i\neq i'$ and $j\neq j'$, it is then
  straightforward to extend the usual law-of-large-numbers type arguments
  to show that $\lim_{m\to \infty}t_m^{Y,n}(f) = \alpha |_{[n]}(f)$, $P$-a.s.
  To show that this convergence holds $P$-a.s.~jointly for all such $f$,
  one can then adapt the proof of Varadarajan's Theorem (see, e.g.,
    Theorem~11.4.1 in~\cite{Dud02}), which completes the proof in the
  dissociated case.

  In the general case, denoting by $\alpha $ the regular conditional
  distribution of $Y$ given $\mathcal E_Y$, using that
  $\alpha \in \mathcal D_S$ by Proposition~\ref{p:mixture}, and the claim
  in the dissociated case, we obtain
  \begin{equation}
    P[|Y|=\alpha ]=E\big[P[|Y|=\alpha|\mathcal E_Y]\big]=1,
  \end{equation}
  which completes the proof.
\end{proof}

\begin{remark}
  \label{r:Dstar}
  For the rest of the paper, it will be suitable to extend the definition
  of $|y|$ to all possible $y\in \mathbb S$. For those $y\in \mathbb S$ for
  which some of the limits $t^{y,n}$ do not exist, we define
  $|y|=\partial$, where $\partial\notin\mathcal P(\mathbb S)$ is an
  arbitrary symbol. In addition, for $y$ such that $|y|$ exists but is not
  in $\mathcal D_S$, we set $|y|=\partial$ as well.  By
  Proposition~\ref{p:dislimits}, we can then view
  $|y|$ as a map from $\mathbb S$ to
  $\mathcal D^\star_{S}:=\mathcal D_{S}\cup \{\partial\}$.
\end{remark}

As can be seen from
the previous results, the differences between exchangeable and weakly
exchangeable arrays are mostly a matter of notation. That is why, from
now on, we mostly focus on the exchangeable case; the corresponding
statements for the weakly exchangeable case can be derived easily.

\subsection{Relation to exchangeable graphs and  graph limits}

The above construction is a straightforward generalisation of the graph
limit construction from the theory of dense random graphs, which we recall
briefly.

A (vertex) \mindex{exchangeable random graph} is a random graph $G$ with
countably many vertices labelled by $\mathbb N$ whose distribution is
invariant under permutations of the labels. By considering the adjacency
matrix $(G_{ij})_{i,j\in \mathbb N}$ of this graph, it can be viewed as a
$\{0,1\}$-valued weakly exchangeable array whose diagonal entries are $0$.

Graph limits were introduced by Lovász and Szegedy \cite{LS06} while
studying sequences of dense graphs. They encode the limiting density of
finite subgraphs in an infinite graph. Formally, let $\mathcal G_n$ be
the set of all graphs with $n$ vertices labelled by $[n]$. For $m\ge n$
and $F\in \mathcal G_n$ and $G\in \mathcal G_m$, let $\ind(F,G)$ be the
number of injections $\psi\colon[n]\to [m]$ such that
$G_{\psi(i)\psi(j)} = F_{ij}$  for all $i,j\in [n]$. Then, for every
infinite graph $G$ with vertices labelled by $\mathbb N$, one can define
the ``density of $F$ in $G$''
\begin{equation}
  t(F,G)=\lim_{m\to\infty}\frac{\ind(F,G|_{[m]})}{(m)_n}, \qquad F\in
    \mathcal G_n.
\end{equation}

It can be checked easily that $t(\cdot, G)$, restricted to $\mathcal G_n$,
if it exists, is a probability measure on $\mathcal G_n$. This probability
measure, in fact,  coincides with the measure $t^{G,n}$ that was introduced
in \eqref{e:tnweak}, when graphs are identified when their adjacency
matrices.

By construction, every $t(\cdot,G)$ is invariant under action of $\Sigma $,
\begin{equation}
  \label{e:graphex}
  t(F^\pi,G)=t(F,G), \quad\text{for every $F\in \mathcal G_n$,
    $\pi \in \Sigma_n$.}
\end{equation}
Similarly,  the following consistency relation,
corresponding to \eqref{e:consistency} above, holds:
\begin{equation}
  t(F,G)=\sum_{\bar F\in \mathcal G_m: \bar F|_{[n]}= F } t(\bar F, G).
\end{equation}
That means that $(t(F,G))_{F\in \cup_n \mathcal G_n}$, if it exists for
every $F\in \cup_n \mathcal G_n$, can be viewed (again in the sense of
  Kolmogorov's extension theorem) as a distribution of a
random graph, which must be exchangeable due to \eqref{e:graphex}.
This distribution corresponds to $|y|$ of the previous section.

\section{Dynamics of exchangeable arrays}

We now turn to the main goal of this paper, the investigation  of
processes $X=(X(t))_{t\in T}$ taking values in the space $\mathbb S$ of
two-dimensional $S$-valued arrays. Here, $T$ denotes the set of times
which can be both discrete, $T= \mathbb N_0$, or continuous $T=[0,\infty)$.

In the continuous-time case, we assume that the sample paths of $X$ are
càdlàg. Since we endowed $\mathbb S$ with the product topology, this is
the case iff every restriction $X|_{[n]}$ has càdlàg paths in
$\mathbb S_n$, or equivalently, $t\mapsto X_{ij}(t)$ is càdlàg for every
$i,j\in \mathbb N$. We write, $D(S)$ for the space of all càdlàg functions from
$T$ to $S$, endowed with the usual Skorokhod topology. The previous
reasoning implies that $D(\mathbb S)=(D(S))^{\mathbb N^2}$.

By convention, every function on $T$ is càdlàg in the discrete-time
case. This allows us to use the adjective `càdlàg' without specifying
which case we consider.

A $\mathbb S$-valued process $X$ is called \mindex{exchangeable}, if
\begin{equation}
  \label{e:exprocess}
  X^{\boldsymbol \pi}:= (X^{\boldsymbol \pi}(t))_{t\in T} \laweq X,
  \qquad \text{for every $\boldsymbol \pi\in \Sigma^2$.}
\end{equation}
Equivalently, viewing $X$ as an array of functions
$(t\mapsto X_{ij}(t))_{i,j\in \mathbb N}$, it is often useful to regard
$X$ as an exchangeable $D(S)$-valued array. Corresponding to this point of
view, we define an exchangeable $\sigma $-field, $\mathcal E_X$,
associated to the whole process,
\begin{equation}
  \mathcal E_X= \{X^{-1}(A):A \in \mathcal E_{D(S)}\},
\end{equation}
where $\mathcal E_{D(S)}$ is defined as $\mathcal E_S$ with $D(S)$ playing
the role of $S$.

The process $X$ is a \mindex{Markov process} when the Markov property
holds, that is the past $(X(s))_{s\le t}$ and the future $(X(s))_{s\ge t}$
are conditionally independent given the present $X(t)$ for all $t\in T$.
The following proposition gives criteria implying the exchangeability
of a Markov process. Its straightforward proof is left to the reader.

\begin{proposition}
  Let $X$ be an $\mathbb S$-valued Markov process with
  transition probability kernel
  \begin{equation}
    \label{e:transitionexch}
    p_{s,t}(x,A):=P[X(t)\in A \mid X(s)=x], \qquad
    s<t\in T, A\in \mathcal B( \mathbb S).
  \end{equation}
  Then, $X$ is exchangeable if
  \begin{enumerate}
    \item[\textup{(a)}] its initial state $X(0)$ is an $S$-valued exchangeable array, that
    is
    \begin{equation}
      X(0)^{\boldsymbol \pi}\laweq X(0).
    \end{equation}
    \item[\textup{(b)}] its transition kernels
    are invariant under action of $\Sigma^2$, that is for every
    $\boldsymbol\pi\in \Sigma^2$, $s<t\in T$, $x\in \mathbb S$, and
    $A\in \mathcal B(\mathbb S)$
    \begin{equation}
      \label{e:markovkernelexch}
      p_{s,t}(x^{\boldsymbol\pi},A^{\boldsymbol \pi}) = p_{s,t}(x,A).
    \end{equation}
  \end{enumerate}
\end{proposition}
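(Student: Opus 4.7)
The plan is to verify that $X^{\boldsymbol\pi}$ is a Markov process with the same initial distribution and the same transition kernels as $X$, and then invoke uniqueness in law of such processes (via their finite-dimensional distributions) to conclude $X^{\boldsymbol\pi}\laweq X$. A useful preliminary observation is that $\Phi_{\boldsymbol\pi}\colon y\mapsto y^{\boldsymbol\pi}$ is a homeomorphism of $\mathbb S$ with inverse $\Phi_{\boldsymbol\pi^{-1}}$, since permuting coordinates of a countable product space is continuous and bi-measurable. In particular, $\Phi_{\boldsymbol\pi}$ preserves càdlàg paths coordinate-wise, so $X^{\boldsymbol\pi}$ lives in the same path space as $X$.

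The initial distribution is handled directly by hypothesis (a): $X^{\boldsymbol\pi}(0)=X(0)^{\boldsymbol\pi}\laweq X(0)$. For the Markov property and the transition kernels, I would note that $\sigma(X^{\boldsymbol\pi}(s))=\sigma(X(s))$ for each $s\in T$ (because $\Phi_{\boldsymbol\pi}$ is bi-measurable), so the conditional independence of past and future given the present transfers from $X$ to $X^{\boldsymbol\pi}$. The transition kernel of $X^{\boldsymbol\pi}$ is then
\begin{equation*}
  \tilde p_{s,t}(y,A) = P[X(t)^{\boldsymbol\pi}\in A\mid X(s)^{\boldsymbol\pi}=y] = p_{s,t}\bigl(y^{\boldsymbol\pi^{-1}},\, A^{\boldsymbol\pi^{-1}}\bigr),
\end{equation*}
and applying \eqref{e:markovkernelexch} with $\boldsymbol\pi^{-1}$ in place of $\boldsymbol\pi$ yields $\tilde p_{s,t}(y,A)=p_{s,t}(y,A)$.

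Consequently, $X$ and $X^{\boldsymbol\pi}$ are Markov processes with identical initial laws and identical transition kernels, hence their finite-dimensional distributions coincide. In the discrete-time setting this already gives $X^{\boldsymbol\pi}\laweq X$; in the continuous-time case, matching finite-dimensional distributions together with the càdlàg sample paths of both processes yields equality in distribution on $D(\mathbb S)$. I do not anticipate any serious obstacle: the most delicate point is the bookkeeping of the change-of-variable identity in the kernel computation, which reduces to the bi-measurability of $\Phi_{\boldsymbol\pi}$ and the identity $\{X(t)^{\boldsymbol\pi}\in A\}=\{X(t)\in A^{\boldsymbol\pi^{-1}}\}$. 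This is consistent with the authors' remark that the proof is straightforward.
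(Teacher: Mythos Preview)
Your proposal is correct and follows exactly the standard route one would expect; the paper itself offers no proof of this proposition, explicitly stating that ``its straightforward proof is left to the reader'', so there is nothing to compare against beyond noting that your argument is the intended one. The bookkeeping you flag---that $\{X(t)^{\boldsymbol\pi}\in A\}=\{X(t)\in A^{\boldsymbol\pi^{-1}}\}$ and hence $\tilde p_{s,t}(y,A)=p_{s,t}(y^{\boldsymbol\pi^{-1}},A^{\boldsymbol\pi^{-1}})=p_{s,t}(y,A)$---is exactly right, and the conclusion via equality of finite-dimensional distributions is the natural finish.
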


For convenience, we mostly omit ``$\mathbb S$-valued'' from the
terminology and say, e.g., ``exchangeable Markov process'' instead of
``$\mathbb S$-valued exchangeable Markov process''.

We now study how exchangeable Markov processes interact with the
``projection'' operation
$\mathbb S\ni y \mapsto |y|\in \mathcal D^\star_S$,
cf.~Remark~\ref{r:Dstar}. Our first result implies that the projection
of $X(t)$ is in $\mathcal D_S$, a.s., simultaneously for all $t\in T$,
that is one can, a.s., project the process $X$ on the space $\mathcal D_S$
of (the distributions of) dissociated exchangeable arrays,
cf.~Proposition~\ref{p:dislimits}. Remark that Markov property is not
assumed.

\begin{theorem}
  \label{t:markovprojection}
  Let $X$ be an exchangeable process with càdlàg sample paths. Then, $P$-a.s.,
  $|X(t)|\in \mathcal D_S$ for all $t\in T$.
\end{theorem}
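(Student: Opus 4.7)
The statement splits into the discrete and continuous time cases. In discrete time $T=\mathbb N_0$, the index set is countable, and Proposition~\ref{p:dislimits}(a) applied to the exchangeable $S$-valued array $X(t)$ for each fixed $t$, combined with a countable intersection of full-probability events, immediately yields the claim.

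In continuous time $T=[0,\infty)$, the pointwise argument already gives $|X(t)|\in \mathcal D_S$ simultaneously for all $t$ in a countable dense subset $T_0\subset T$ on a full-probability event $\Omega_1$. To extend to all $t\in T$, my strategy is to view $X$ as an exchangeable $D(S)$-valued array (exploiting $D(\mathbb S)=D(S)^{\mathbb N^2}$) and apply Proposition~\ref{p:dislimits}(a) at that higher level; this gives a full-probability event $\Omega_0$ on which $|X|\in \mathcal D_{D(S)}$. For each $t\in T$, the coordinate-wise evaluation $\phi_t\colon D(S)^{\mathbb N^2}\to\mathbb S$ with $\phi_t(y)_{ij}:=y_{ij}(t)$ is measurable, and its pushforward $\mu_t:=(\phi_t)_*|X|$ inherits dissociation and exchangeability from $|X|$; hence $\mu_t\in \mathcal D_S$ for every $t\in T$ on $\Omega_0$.

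It therefore suffices to identify $|X(t)|$ with $\mu_t$ on $\Omega_0\cap \Omega_1$ simultaneously for every $t\in T$. For $t\in T_0$ this follows from a LLN in the spirit of the proof of Proposition~\ref{p:dislimits}: conditionally on $\mathcal E_X$, the array $X$ has distribution $|X|\in \mathcal D_{D(S)}$, so for each bounded continuous $f\colon \mathbb S_n\to \mathbb R$ the summands of $t^{X(t),n}_m(f)$ indexed by pairs $(\psi_1,\psi_2)$ with disjoint images are independent, which suffices to obtain $t^{X(t),n}_m(f)\to \mu_t|_{[n]}(f)$ almost surely. A countable union over $T_0$ and over a countable separating family of such $f$'s establishes $|X(t)|=\mu_t$ for all $t\in T_0$. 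To extend to $t\notin T_0$, I would use that both sides depend càdlàg on $t$ when tested against continuous bounded $f$: on the one hand, $\mu_t|_{[n]}(f)=\int f(y(t))\,|X|_{[n]}(\d y)$ is right-continuous by dominated convergence (since $y(t)$ is càdlàg in $t$ for fixed $y$); on the other hand, $t\mapsto t^{X(t),n}_m(f)$ is manifestly càdlàg for each $m$, and its limit as $m\to\infty$ extends from $T_0$ to all of $T$ by right-continuity.

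The main obstacle is the LLN step at $t\in T_0$: a direct continuous-mapping argument applied to the relation $t^{X(t),n}_m=(\phi_t)_* t^{X,n}_m$ is unavailable because $\phi_t$ is not Skorokhod-continuous at discontinuity points of paths in the support of $|X|$. The way out is to bypass weak convergence on $D(S)^{n^2}$ entirely and perform the LLN directly on the $\mathbb R$-valued averages, which needs only the pairwise independence with disjoint index sets ensured by dissociation of $|X|$; the remaining technical care lies in making the convergence sufficiently uniform in $t$ for the right-continuity argument to propagate equality from $T_0$ to all $t\in T$.
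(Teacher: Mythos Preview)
Your discrete-time case and your higher-level use of Proposition~\ref{p:dislimits} with $D(S)$ in place of $S$ match the paper. The genuine gap is in the extension from the countable dense set $T_0$ to all of $T$. You correctly identify that the continuous mapping theorem via $\phi_t$ is obstructed by discontinuities of paths, and you propose to work around this by right-continuity: establish $|X(t)|=\mu_t$ on $T_0$, then extend. But the argument ``$t\mapsto t^{X(t),n}_m(f)$ is càdlàg for each $m$, and its limit as $m\to\infty$ extends from $T_0$ to all of $T$ by right-continuity'' does not go through: pointwise limits of c\`adl\`ag functions need not be c\`adl\`ag, and knowing the $m$-limit exists on $T_0$ says nothing about its existence at $t\notin T_0$ without the uniformity you flag as missing. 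That uniformity is not a routine technicality; I do not see how to obtain it directly.

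The paper's key observation, which you do not use, is that although $\phi_t$ is not everywhere continuous on $D(\mathbb S_n)$, it is $t^{X,n}$-a.s.\ continuous for all $t$ outside a \emph{countable} set $J_n^X$ (the set of fixed-time jump atoms of the limiting law on path space). Hence the continuous mapping theorem \emph{does} apply at all but countably many $t$, giving $t^{X(t),n}_m\to(\phi_t)_* t^{X,n}$ there; the countable exceptional set $\bigcup_n J_n^X$ is then handled by the same countable-union argument you use for $T_0$. This decomposition into ``countably many bad times'' plus ``continuity-point times'' replaces your right-continuity extension and closes the gap cleanly. Once you see that the bad set is countable, there is no need for any uniformity in $m$.
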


\begin{proof}
  In the discrete-time case, it suffices to observe that $X(t)$ is an
  exchangeable $S$-valued array for every $t\in \mathbb N_0$. By
  Proposition~\ref{p:dislimits}, $|X(t)|\in \mathcal D_S$, $P$-a.s., and
  the claim follows, since $\mathbb N_0$ is countable.

  In the continuous-time case, we view $X$ as a $D(S)$-valued
  exchangeable array, cf.~the remark below \eqref{e:exprocess}, and assume
  that this array is dissociated first. Using
  Proposition~\ref{p:dislimits} with $D(S)$ in place of $S$, recalling
  that $|Y|$ there denotes the sequence of limits
  $(t^{Y,n})_{n\in \mathbb N}$, we see that
  for every $n\in \mathbb N$, the sequence $t^{X,n}_m$ of probability
  measures on $D(\mathbb S_n)$ converges weakly as $m\to\infty$ $P$-a.s.~to
  some $t^{X,n}\in\mathcal P(D(\mathbb S_n))$. Moreover, since we assume
  that $X$ is dissociated, $t^{X,n}$ is a.s.~deterministic and coincides
  with the distribution of $X|_{[n]}$, by Proposition~\ref{p:dislimits}(b).

  Let $J_n^X$ be the (deterministic) set of times defined by
  \begin{equation}
    J_n^X=\big\{t\in T:t^{X,n}\big(
        \{x\in D(\mathbb S_n): t \text{ is a jump point of $x$}\}
        \big)>0\big\}.
  \end{equation}
  By the general theory of probability measures on Skorokhod spaces, see
  Chapter 15 in \cite{Bil99}, $J_n^X$ is at most countable.  Therefore,
  using the same argument as in the discrete case, $P$-a.s.,
  $|X(t)|\in \mathcal D_S$ for all $t\in \cup_n J_n^X$.

  For $t\in T\setminus \cup_n J_n^X$, the coordinate projections
  $\phi_t \colon D(\mathbb S_n)\ni x \mapsto x(t) \in \mathbb S_n$ are
  $t^{X,n}$-a.s.~continuous. By Theorem~5.1 of \cite{Bil99}, the weak
  convergence of $t^{X,n}_m$ then implies the existence of the weak limit
  $t^{X(t),n} := \phi_t \circ t^{X,n}
  = \lim_{m\to\infty} \phi_t \circ t^{X,n}_m
  = \lim_{m\to\infty} t^{X(t),n}_m$.
  The limit measures $t^{X(t),n}\in \mathcal P(\mathbb S_n)$ are
  consistent and dissociated, as $t^{X,n}_m$ are, and thus determine a
  probability measure $|X(t)|\in\mathcal D_S$, $P$-a.s., simultaneously
  for all $t\in T\setminus \cup_n J_n^X$.

  The last two paragraphs together imply that for a dissociated $X$,
  $P[|X(t)|\in \mathcal D_S \text{ for all }t\in T ] =1$.

  A general exchangeable càdlàg process $X$ can be written as a mixture of
  dissociated processes by conditioning on $\mathcal E_X$, by
  Proposition~\ref{p:mixture}. Therefore,
  \begin{equation}
    \begin{split}
      P&[|X(t)|\in \mathcal D_S \text{ for all }t\in T ]
      \\&
      = \int_\Omega P[|X(t)|\in \mathcal D_S \text{ for all }t\in T \mid
        \mathcal E_X ](\omega) P(\d \omega ).
    \end{split}
  \end{equation}
  Under $P[\cdot \mid \mathcal E_X]$, the law of $X$ is dissociated, and thus
  the integrand equals~1, a.s., by the previous paragraph. This completes the proof.
\end{proof}

From Proposition~\ref{p:dislimits}, we know that $|X(t)|$ is a regular
conditional distribution of $X(t)$ given its own exchangeable $\sigma $-field
$\mathcal E_{X(t)}$. In general, however, $\mathcal E_{X(t)}$ does not
need to agree with $\mathcal E_X$.  We now show that $|X(t)|$ is also a
regular conditional distribution of $X(t)$ given $\mathcal E_X$.

\begin{lemma}
  \label{l:conditioning}
  \textup{(a)} For every $t\in T$,
  \begin{equation*}
    \mathcal E_{X(t)}  \subset \mathcal E_{X}.
  \end{equation*}

  \textup{(b)} Let $\alpha^X$ be the regular conditional distribution of
  $X$ given $\mathcal E_X$. Then, $P$-a.s.,
  \begin{equation}
    \alpha^X(\omega ,X(t)\in \cdot) = |X(t)|(\omega ,\cdot).
  \end{equation}
  or, equivalently, denoting by $\phi_t$  the projection
  $D(\mathbb S)\ni x \mapsto x(t) \in\mathbb S$,
  \begin{equation}
    \phi_t\circ \alpha^X = |X(t)|.
  \end{equation}
\end{lemma}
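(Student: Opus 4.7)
For part (a), the plan is to exhibit each set in $\mathcal E_{X(t)}$ as the preimage under $X$ of a set in $\mathcal E_{D(S)}$. Let $\phi_t \colon D(\mathbb S) \to \mathbb S$ denote the time-$t$ projection. Since $X(t) = \phi_t \circ X$, for every $A \in \mathcal E_S$ we have $X(t)^{-1}(A) = X^{-1}(\phi_t^{-1}(A))$, so it will suffice to verify $\phi_t^{-1}(A) \in \mathcal E_{D(S)}$. The key observation will be that $\phi_t$ intertwines the action of $\Sigma^2$: writing $x \in D(\mathbb S)$ coordinatewise as $(x_{ij})$ with $x_{ij} \in D(S)$, one has $\phi_t(x^{\boldsymbol\pi})_{ij} = x_{\pi_1(i)\pi_2(j)}(t) = \phi_t(x)^{\boldsymbol\pi}_{ij}$. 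Combined with $A^{\boldsymbol\pi} = A$, this gives $(\phi_t^{-1}(A))^{\boldsymbol\pi} = \phi_t^{-1}(A)$ for every $\boldsymbol\pi \in \Sigma^2$, and part~(a) follows.

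For part (b), the strategy is to apply Proposition~\ref{p:dislimits}(b) conditionally on $\mathcal E_X$. First I would note that $\alpha^X(\omega) \in \mathcal D_{D(S)}$ for $P$-a.e.~$\omega$ by Proposition~\ref{p:mixture}(b) applied to $X$ viewed as a $D(S)$-valued exchangeable array. Next I need to check that the time-$t$ marginal $\phi_t \circ \alpha^X(\omega)$ lies in $\mathcal D_S$: if $X$ has the distribution $\alpha^X(\omega)$, then the blocks $(X_{ij})_{i,j \le n}$ and $(X_{ij})_{i,j > n}$ are independent as $D(S)$-valued arrays, hence their evaluations at time $t$ are independent as $S$-valued arrays, so $X(t)$ is dissociated under $\alpha^X(\omega)$.

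With these preparations in place, under $P[\,\cdot \mid \mathcal E_X](\omega)$ the process $X$ has law $\alpha^X(\omega)$, so $X(t) = \phi_t(X)$ has law $\phi_t \circ \alpha^X(\omega) \in \mathcal D_S$. Proposition~\ref{p:dislimits}(b) then yields $|X(t)| = \phi_t \circ \alpha^X(\omega)$ almost surely under this conditional law. Integrating out the conditioning gives
\[
  P[\,|X(t)| = \phi_t \circ \alpha^X\,] = E\bigl[P[\,|X(t)| = \phi_t \circ \alpha^X \mid \mathcal E_X\,]\bigr] = 1,
\]
which is the desired identity.

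The main obstacle is bookkeeping rather than substance: one has to keep straight the two array formats ($S$-valued vs.~$D(S)$-valued) together with their respective exchangeable $\sigma$-fields, and make sure that dissociation is preserved under the time-$t$ evaluation so that Proposition~\ref{p:dislimits}(b) really applies under the conditional law.
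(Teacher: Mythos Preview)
Your proof of part~(a) matches the paper's argument essentially verbatim.

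For part~(b), your argument is correct but genuinely different from the paper's. The paper conditions on the \emph{smaller} $\sigma$-field $\mathcal E_{X(t)}$ and shows that $\int P(\d\omega\mid\mathcal E_{X(t)})\,(\phi_t\circ\alpha^X)(\omega)$ is a version of the regular conditional distribution of $X(t)$ given $\mathcal E_{X(t)}$, hence equals $|X(t)|$ a.s.; it then invokes the fact that dissociated distributions are extremal among exchangeable ones to conclude that the average is trivial, i.e.\ $\phi_t\circ\alpha^X=|X(t)|$ pointwise. You instead condition directly on the larger $\sigma$-field $\mathcal E_X$, observe that dissociation of the $D(S)$-valued array $X$ passes to its time-$t$ marginal $X(t)$, and apply Proposition~\ref{p:dislimits}(b) under the conditional law to read off $|X(t)|=\phi_t\circ\alpha^X(\omega)$ immediately. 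Your route avoids the extremality step and is arguably more transparent; the paper's route has the minor virtue of not needing to check that dissociation survives the pushforward by $\phi_t$, since it works with $|X(t)|$ (already known to be dissociated) rather than with $\phi_t\circ\alpha^X$.
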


\begin{proof}
  (a) Let $B\in \mathcal E_S$. Then $\phi_t^{-1}(B)\in \mathcal E_{D(\mathbb S)}$,
  and thus $X^{-1}(\phi_t^{-1}(B))\in \mathcal E_X$. In addition,
  \begin{equation}
    \begin{split}
      X^{-1}(\phi_t^{-1}(B))
      &= \{\omega \in \Omega : X(\omega )\in \phi_t^{-1}(B)\}
      \\&= \{\omega \in \Omega : (\phi_t \circ X)(\omega )\in B\}
      \\&= \{\omega \in \Omega : X(t)(\omega )\in B\}
      = X(t)^{-1}(B).
    \end{split}
  \end{equation}
  Since, by definition,
  $\mathcal E_{X(t)} = \{X(t)^{-1}(B):  B\in \mathcal E_S\}$, it follows
  that $\mathcal E_{X(t)}\subset \mathcal E_X$, as claimed.

  (b) Heuristically, the proof uses the fact that $|X(t)|$ is a dissociated
  distribution, and dissociated distributions are extremal in the set of
  all exchangeable distributions.

  By properties of regular conditional distributions,  for every
  $C\in \mathcal E_X$, and every bounded measurable
  $f\colon \mathbb S\to \mathbb R$,
  \begin{equation}
    \label{eq:EfX}
    E[\bbone_C f(X(t))]
    = \int_\Omega \bbone_C(\omega ) \alpha^X(\omega ,f\circ \phi_t) P(\d \omega ).
  \end{equation}
  By conditioning on $\mathcal E_{X(t)}$, we obtain
  \begin{equation}
    \begin{split}
      \eqref{eq:EfX}
      =\int_\Omega  P( \d \omega' ) \int_\Omega P( \d \omega
        \mid \mathcal E_{X(t)})(\omega ')
      \bbone_C(\omega )\alpha^X(\omega , f\circ \phi_t).
    \end{split}
  \end{equation}
  Choosing $C\in \mathcal E_{X(t)}\subset \mathcal E_X$ and using  that
  $\bbone_C(\omega )= \bbone_C(\omega ')$,
  $P(\cdot \mid \mathcal E_{X(t)})(\omega ')$-a.s., in this case, we get
  \begin{equation}
    \begin{split}
      \eqref{eq:EfX}
      =\int_\Omega  P(\d \omega ') \bbone_C(\omega') \int_\Omega P( \d \omega
        \mid \mathcal E_{X(t)})(\omega ')
      \alpha^X(\omega , f\circ \phi_t),
    \end{split}
  \end{equation}
  Observe that, as function of $\omega '$, the inner integral is
  $\mathcal E_{X(t)}$ measurable. Therefore,
  \begin{equation}
    \int_\Omega P(\d \omega \mid \mathcal E_{X(t)})
    (\phi_t\circ\alpha^X) (\omega )
  \end{equation}
  is a version of regular conditional distribution of $X(t)$ given
  $\mathcal E_{X(t)}$, that is it equals $|X(t)|$, $P$-a.s. However,
  $|X(t)|$ is dissociated, and thus extremal in the set of all
  exchangeable probability distributions. Therefore, necessarily,
  $(\phi_t\circ \alpha^X)(\omega )=|X(t)|(\omega )$ must hold true for
  $P(\cdot \mid \mathcal E_{X(t)})$-a.e.~$\omega $. This then implies that
  $\phi_t\circ \alpha^X = |X(t)|$, $P$-a.s., as claimed.
\end{proof}

\begin{theorem}
  \label{t:cadlag}
  Let $X$ be an exchangeable process with càdlàg sample paths. Then,
  the projection $|X|= (|X(t)|)_{t\in T}$ has $P$-a.s.~càdlàg sample paths.
\end{theorem}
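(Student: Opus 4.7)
The plan is to reduce the problem, via restrictions, to a deterministic càdlàg property of the pushforwards $\phi_t\circ t^{X,n}$ of a fixed measure on $D(\mathbb S_n)$, which is then handled by bounded convergence using that every element of $D(\mathbb S_n)$ is càdlàg.

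First I would observe that, since $\mathbb S$ carries the product topology, weak convergence in $\mathcal P(\mathbb S)$ is equivalent to weak convergence of all restrictions to $\mathcal P(\mathbb S_n)$. It therefore suffices to show that for every $n\ge 1$ the process $Z_n:=(|X(t)||_{[n]})_{t\in T}$ has $P$-a.s.~càdlàg paths in the (metrizable) weak topology on $\mathcal P(\mathbb S_n)$. The discrete-time case is immediate. For the continuous-time case, I would first reduce to the dissociated setting exactly as in Theorem~\ref{t:markovprojection}: by Proposition~\ref{p:mixture}, under $P(\cdot\mid\mathcal E_X)$ the process $X$ is dissociated as a $D(S)$-valued array, so it suffices to treat the dissociated case and then integrate.

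Assume now that $X$ is dissociated. Proposition~\ref{p:dislimits}(b) applied to $X$ viewed as a $D(S)$-valued array gives that $t^{X,n}$ is deterministic and equals the law of $X|_{[n]}\in D(\mathbb S_n)$. For every $t\in T$, the $S$-valued array $X(t)$ is itself dissociated, being obtained from $X$ by coordinatewise evaluation at $t$; applying Proposition~\ref{p:dislimits}(b) to $X(t)$ identifies $|X(t)||_{[n]}$ with the law of $X(t)|_{[n]}$, which in turn equals $\phi_t\circ t^{X,n}$. Hence $Z_n(t)=\phi_t\circ t^{X,n}$ deterministically. For any bounded continuous $f\colon\mathbb S_n\to\mathbb R$,
\begin{equation}
(\phi_t\circ t^{X,n})(f)=\int_{D(\mathbb S_n)} f(x(t))\, t^{X,n}(\d x),
\end{equation}
and since every $x\in D(\mathbb S_n)$ is càdlàg, $f(x(s))\to f(x(t))$ as $s\downarrow t$ and $f(x(s))\to f(x(t-))$ as $s\uparrow t$ for every $x$. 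Bounded convergence then gives that $s\mapsto (\phi_s\circ t^{X,n})(f)$ is càdlàg with left limit $\int f(x(t-))\,t^{X,n}(\d x)$. Running this through a fixed countable convergence-determining subfamily of $C_b(\mathbb S_n)$ and invoking metrizability of the weak topology on $\mathcal P(\mathbb S_n)$ would upgrade these pointwise statements to càdlàg-ness of $s\mapsto \phi_s\circ t^{X,n}$ in $\mathcal P(\mathbb S_n)$.

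The main obstacle I anticipate is that the identification $Z_n(t)=\phi_t\circ t^{X,n}$ must hold for \emph{every} $t$, including the exceptional times $t\in J_n^X$ arising in the proof of Theorem~\ref{t:markovprojection} where the continuous-mapping argument used there breaks down. The plan above sidesteps this issue by not relying on the pathwise $m\to\infty$ limit at all: Proposition~\ref{p:dislimits}(b), applied to the dissociated $S$-valued array $X(t)$ itself, identifies both sides of the identity with the law of $X(t)|_{[n]}$ uniformly in $t$.
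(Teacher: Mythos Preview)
Your overall strategy coincides with the paper's: reduce to the dissociated case by conditioning on $\mathcal E_X$, identify $|X(t)|$ there with the deterministic law of $X(t)$, and then argue that $t\mapsto\text{law of }X(t)$ is c\`adl\`ag because $X$ has c\`adl\`ag paths. Your bounded-convergence computation $(\phi_t\circ t^{X,n})(f)=\int f(x(t))\,t^{X,n}(\d x)$ is precisely the paper's ``pointwise and thus in distribution'', and your left-limit $\int f(x(t-))\,t^{X,n}(\d x)$ corresponds to the paper's introduction of $X^-(t)$.

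There is, however, a real gap in your final paragraph. You assert that Proposition~\ref{p:dislimits}(b), applied to the array $X(t)$, yields the identity $|X(t)||_{[n]}=\phi_t\circ t^{X,n}$ ``uniformly in $t$''. It does not: for each fixed $t$ the proposition produces an a.s.\ identity on a $t$-dependent full-measure set, and $T$ is uncountable. Nor can you ``sidestep the pathwise $m\to\infty$ limit'', since that limit is the very definition of $|X(t)|$. The correct argument is the one already carried out in the proof of Theorem~\ref{t:markovprojection}: on the single full-probability event where $t^{X,n}_m\to t^{X,n}$ weakly in $\mathcal P(D(\mathbb S_n))$, the continuous-mapping theorem gives $t^{X(t),n}=\phi_t\circ t^{X,n}$ simultaneously for all $t\notin\cup_n J_n^X$; for the countable exceptional set $\cup_n J_n^X$ one then applies Proposition~\ref{p:dislimits}(b) time by time. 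The paper's own proof of Theorem~\ref{t:cadlag} invokes this same fact tacitly when it writes ``when it exists, it coincides with the distribution of $X(t)$''. Once your last paragraph is replaced by a reference back to that split, your argument is complete and matches the paper's.
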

\begin{proof}
  Assume first that the distribution of $X$ is dissociated, that is
  $\mathcal E_X$ is $P$-trivial. Then, $|X(t)|$ exists $P$-a.s.~simultaneously
  for all $t\in T$, and when it exists, it coincides with the
  distribution of $X(t)$. Since the trajectories of $X$ are càdlàg,
  $\lim_{s\downarrow t}X(s) = X(t)$ pointwise and thus in distribution,
  implying $|X(t)|$ is right continuous.

  On the other hand, let
  $X^-_{ij}(t) = \lim_{s\uparrow t} X_{ij}(t)$. Then, $X^-(t)$ is
  exchangeable array, and, by the same arguments as in the proof of
  Theorem~\ref{t:markovprojection}, one can show that $|X^-(t)|$ exists
  a.s.~simultaneously for all $t\in T$. As the distribution of $X$ is dissociated,
  $|X^-(t)|$ agrees a.s.~with the distribution of $X^-(t)$.
  Repeating the argument from the first part of the proof, we obtain
   $\lim_{s\uparrow t} |X(s)|  = |X^-(t)|$.

  For a general exchangeable process $X$, we write its distribution as a
  mixture of dissociated distributions by conditioning on $\mathcal E_X$,
  \begin{equation}
    P(X\in \cdot) = \int_\Omega  P(\d \omega ) P(X\in \cdot \mid \mathcal E_X)(\omega ).
  \end{equation}
  Under $P(\cdot \mid \mathcal E_X)$, the distribution of $X$ is dissociated,
  and $|X(t)|$ agrees with regular conditional distribution of $X(t)$ given $\mathcal E_X$, by
  Lemma~\ref{l:conditioning}.
  So, by the previous reasoning,
  $P(t\mapsto |X(t)| \text{ is càdlàg} \mid \mathcal E_X)=1$ a.s., and the
  claim follows.
\end{proof}

\begin{theorem}
  Let $X$ be an exchangeable Markov process with càdlàg sample paths.
  Then, $|X|$ is a $\mathcal D^\star_S$-valued Markov process with
  a.s.~càdlàg sample paths.
\end{theorem}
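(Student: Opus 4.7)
The first two conclusions are essentially free: $|X(t)|\in\mathcal{D}_S$ for all $t$ holds by Theorem~\ref{t:markovprojection}, and the a.s.~càdlàg property of $|X|$ is Theorem~\ref{t:cadlag}. Hence the plan only needs to address the Markov property.

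My plan is to verify the Markov property through its functional form: for every $s<t$ in $T$ and every bounded measurable $g\colon\mathcal{D}_S^\star\to\mathbb{R}$, I want to show that $E[g(|X(t)|)\mid\mathcal{F}_s^X]$, with $\mathcal{F}_s^X=\sigma(X(u):u\le s)$, is $P$-a.s.~of the form $\tilde\Phi(|X(s)|)$ for some measurable $\tilde\Phi\colon\mathcal{D}_S^\star\to\mathbb{R}$. Writing $F(y):=|y|$ and using the Markov property of $X$, this conditional expectation equals $p_{s,t}(X(s),g\circ F)$, so the whole argument reduces to showing that the map $x\mapsto p_{s,t}(x,g\circ F)$ factors through $|x|$, at least modulo $P_{X(s)}$-null sets. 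Once this is in place, the tower property together with $\sigma(|X(u)|,u\le s)\subset\mathcal{F}_s^X$ yields the one-step Markov property of $|X|$ with respect to its own filtration; the transition kernel is simultaneously read off as $\tilde p_{s,t}(\mu,g):=\tilde\Phi(\mu)$, and the full multi-time Markov property then follows by the usual iteration on test functions.

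The factorisation argument splits into an easy and a hard half. On the easy side, $g\circ F$ is $\mathcal{E}_S$-measurable because $|y^{\boldsymbol\pi}|=|y|$ by construction of the empirical limits in~\eqref{e:tn}, and the kernel invariance~\eqref{e:markovkernelexch} forces $p_{s,t}(x^{\boldsymbol\pi},A)=p_{s,t}(x,A)$ for every $A\in\mathcal{E}_S$, so $p_{s,t}(\cdot,A)$ is $\Sigma^2$-invariant and thus $\mathcal{E}_S$-measurable; a monotone-class extension then makes $p_{s,t}(\cdot,h)$ $\mathcal{E}_S$-measurable for every bounded $\mathcal{E}_S$-measurable $h$, in particular for $h=g\circ F$. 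The harder half---the step I expect to be the main obstacle---is to upgrade this $\mathcal{E}_S$-measurability to $\sigma(F)$-measurability, at least modulo $P_{X(s)}$-null sets. Here I plan to exploit that $P_{X(s)}$ is itself exchangeable (since $X(s)$ is an exchangeable array), so Propositions~\ref{p:mixture} and~\ref{p:dislimits} apply and identify $|\cdot|$ with the regular conditional distribution of $X(s)$ given $\mathcal{E}_S$. Unpacking this gives $\bbone_A(y)=P_{X(s)}(A\mid\mathcal{E}_S)(y)=|y|(A)$ for $P_{X(s)}$-a.e.~$y$ whenever $A\in\mathcal{E}_S$, so $\mathcal{E}_S$ and $\sigma(F)$ coincide modulo $P_{X(s)}$-null sets; from this coincidence one can extract the desired measurable $\tilde\Phi\colon\mathcal{D}_S^\star\to\mathbb{R}$ with $p_{s,t}(X(s),g\circ F)=\tilde\Phi(|X(s)|)$ $P$-a.s., completing the plan.
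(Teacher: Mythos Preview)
Your proposal is correct and follows essentially the same route as the paper: both reduce the Markov property to showing that $x\mapsto p_{s,t}(x,A^{\leftarrow})$ factors through $|x|$, first establishing $\mathcal{E}_S$-measurability of this map via the kernel invariance~\eqref{e:markovkernelexch} and then identifying $\mathcal{E}_S$ with $\sigma(|\cdot|)$. The only difference is that the paper outsources the latter identification to Corollary~3.10 of~\cite{Ald85}, whereas you derive it (modulo $P_{X(s)}$-null sets, which suffices) from Propositions~\ref{p:mixture} and~\ref{p:dislimits}.
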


\begin{proof}
  The càdlàg property follows from Theorem~\ref{t:cadlag}. We thus need
  to show that the Markov property is preserved by the map
  $\mathbb S \ni y\mapsto |y|\in \mathcal D_S^\star$. To this end, it is
  sufficient to show that for every $s<t$, and $A\subset \mathcal D_S$
  measurable
  \begin{equation}
    p_{s,t}(x,A^{\leftarrow})=p_{s,t}(x',A^{\leftarrow}), \qquad
    \text{for every $x,x'\in \mathbb S$ with $|x|=|x'|$,}
  \end{equation}
  where $A^\leftarrow = \{x\in \mathbb S: |x|\in A\}$.

  To prove this, observe first
  $A^\leftarrow = (A^\leftarrow)^{\boldsymbol \pi}$ and thus, by the
  exchangeability \eqref{e:transitionexch} of the transition kernel
  \begin{equation}
    p_{s,t}(x,A^\leftarrow)
    = p_{s,t}(x^{\boldsymbol \pi}, (A^\leftarrow)^{\boldsymbol \pi})
    = p_{s,t}(x^{\boldsymbol \pi}, A^\leftarrow),
    \qquad \text{for all } \boldsymbol \pi\in \Sigma^2.
  \end{equation}
  Hence, $x\mapsto p_{s,t}(x,A^\leftarrow)$ is $\mathcal E_S$ measurable.
  In addition, by the same arguments as in Corollary~3.10 of \cite{Ald85},
  $\mathcal E_S$ agrees with the $\sigma $-field generated by the map
  $x\mapsto |x|$, which implies the claim.
\end{proof}

\section{Jumps of discrete-time Markov processes}

In this and the next section, we study in detail the structure of the
jumps of time-homogeneous exchangeable Markov processes. We first
consider processes in discrete time, where the situation is rather simple.

\begin{lemma}
  \label{l:jumpsdiscrete}
  Let $X$ be an exchangeable Markov process in discrete time. Then, the
  array $J_{ij}(t)=\bbone\{X_{ij}(t-1)\neq X_{ij}(t)\}$ encoding its jumps
  at time $t\ge1$
  is also exchangeable. As consequence, only the following two
  possibilities occur a.s.
  \begin{itemize}
    \item $X$ is constant at $t$, that is $X_{ij}(t-1)=X_{ij}(t)$ for all
    $(i,j)\in \mathbb N^2$.
    \item There is a positive proportion of entries which jump, that is
    \begin{equation}
      \lim_{n \to \infty} \frac 1 {n^2}
      \sum_{1\le i,j\le n} \bbone \{X_{ij}(t-1)\neq X_{ij}(t)\} >0.
    \end{equation}
  \end{itemize}
\end{lemma}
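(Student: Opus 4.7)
The plan is to first establish exchangeability of the jump array $J(t)$ and then read off the dichotomy from Proposition~\ref{p:dislimits} applied to the resulting $\{0,1\}$-valued exchangeable array. Neither the Markov hypothesis nor the discrete-time setting plays an essential role; both simply serve to make sense of the per-step jump.

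For the exchangeability, I would view $X$ as an exchangeable $D(S)$-valued array, as permitted by the remark below \eqref{e:exprocess}, and apply the measurable map $D(S) \ni x \mapsto \bbone\{x(t-1) \ne x(t)\} \in \{0,1\}$ entrywise. Since exchangeability is preserved by such entrywise measurable transformations, $J(t)^{\boldsymbol\pi} \laweq J(t)$ for every $\boldsymbol\pi \in \Sigma^2$.

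For the dichotomy, I would apply Proposition~\ref{p:dislimits} to the $\{0,1\}$-valued exchangeable array $J(t)$: P-a.s., $|J(t)|$ exists, lies in $\mathcal D_{\{0,1\}}$, and coincides with the regular conditional distribution of $J(t)$ given $\mathcal E_{J(t)}$. Taking $n=1$ in the definition of $t^{y,n}_m$ gives
\begin{equation*}
  t^{J(t),1}_m(\{1\}) = \frac{1}{m^2} \sum_{1 \le i,j \le m} J_{ij}(t),
\end{equation*}
so the one-point marginal $t^{J(t),1}$ of $|J(t)|$ is the $\mathrm{Bernoulli}(p)$ distribution with random parameter
\begin{equation*}
  p := \lim_{n\to\infty} \frac{1}{n^2} \sum_{1 \le i,j \le n} J_{ij}(t).
\end{equation*}
On the $\mathcal E_{J(t)}$-measurable event $\{p>0\}$, the second alternative of the lemma holds immediately. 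On $\{p=0\}$, exchangeability makes every one-point marginal of $|J(t)|$ equal to $\delta_0$, so $P\big(J_{ij}(t) = 0 \bigm| \mathcal E_{J(t)}\big) = 1$ for each fixed $(i,j) \in \mathbb N^2$; countable additivity then yields $J(t) \equiv 0$ P-a.s.\ on $\{p=0\}$, which is the first alternative.

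I do not anticipate any genuine obstacle. The only mildly delicate step is the passage from ``each one-point marginal of $|J(t)|$ vanishes at $1$'' to ``$J(t)$ is identically $0$'' on $\{p=0\}$; this uses countable additivity together with Proposition~\ref{p:dislimits}'s identification of $|J(t)|$ with a regular conditional distribution, which promotes entrywise a.s.\ vanishing to simultaneous a.s.\ vanishing over all entries.
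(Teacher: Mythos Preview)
Your proposal is correct and follows essentially the same route as the paper: derive exchangeability of $J(t)$ from that of $X$, invoke Proposition~\ref{p:dislimits} so that $t^{J(t),1}(\{1\})$ exists a.s., and then split into the two alternatives according to whether this limit is zero or positive. You spell out the step ``$p=0 \Rightarrow J(t)\equiv 0$'' via the regular-conditional-distribution interpretation and countable additivity, which the paper leaves implicit; your observation that the Markov property is not actually used is also correct.
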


\begin{proof}
  The exchangeability of $J_{ij}(t)$ follows directly from the
  exchangeability of $X$. $J(t)$ is then $\{0,1\}$-valued exchangeable
  array. The proportion of ones in this array equals $t^{J,1}(\{1\})$ in
  the notation of Section~\ref{s:excharrays}. Therefore, by
  Proposition~\ref{p:dislimits}, it exists a.s. If
  it is zero, then the array $J_{ij}$ must be identically $0$, a.s.
  Otherwise, there is a positive proportion of entries that jump.
\end{proof}

\subsection{Restrictions of Markov exchangeable processes are not Markov}

If one is interested not only in the occurrence of jumps, but also in
their ``sizes'', this argument can be pushed even further, similarly
as in \cite{Cra16}. For $t\ge 1$, consider $S^2$-valued array
$Z_{ij}:=(X_{ij}(t-1),X_{ij}(t))$, which is again exchangeable. By
Proposition~\ref{p:dislimits} (with $S^2$ in place of $S$), for every
$n\in \mathbb N$, the limit $t^{Z,n}\in \mathcal P(\mathbb  S_n^2)$
exists a.s.

The measure $t^{Z,n}$ can be used to construct a new Markov transition
kernel $q_n$ on $\mathbb S^n$, by disintegrating $t^{Z,n}$
with respect to its first marginal $t^{X(t-1),n}$,
\begin{equation}
  t^{Z,n}(\d y_1, \d y_2) =  t^{X(t-1),n}(\d y_1)
  q^n_{t-1,t}(y_1, \d y_2),
\end{equation}
or, in the case when $S$ is finite, simply by defining
\begin{equation}
  q^n_{t-1,t}(y_1,y_2)=\frac{t^{Z,n}(\{(y_1,y_2)\})}{t^{X(t-1),n}(\{y_1\})},
  \qquad y_1,y_2\in \mathbb S_n,
\end{equation}
(and $q^n_{t-1,t}(y_1,y_2)=\delta_{y_1,y_2}$ in the case when
  $t^{X(t-1),n}(\{y_1\})=0$).
Since, by Proposition~\ref{p:dislimits}, $t^{X,n}$ agrees with the
distribution of $X|_{[n]}$ given $\mathcal E_X$, it is tempting to
interpret the kernels $q_n$ as transition kernels of $X|_{[n]}$ (at least
  conditionally on $\mathcal E_X$), as is done in \cite{Cra16}:
Proposition~4.8 of \cite{Cra16} contains, among others, the following
claim (stated in the notation of the present paper):
\begin{quote}
    Let $X=(X_t)_{t\in T}$ be a time-homogeneous exchangeable Markov
    process, with $T$ being finite.
    Conditioned on $\mathcal E_{X}$, $X$ is dissociated, and, moreover, for
    every $n\in \mathbb N$, the
    restriction $X|_{[n]}$ of $X$ to $\mathbb S_n$ is (conditionally)
    a time-inhomogeneous Markov chain with transition probabilities
    $q^n_{t-1,t}$.
\end{quote}

We now provide a counterexample for a part of this claim, namely that
$X|_{[n]}$ is (conditionally) Markov. We will also see that the
transition kernel of $X|_{[n]}$ is not $q^n$.

\begin{example}
  \label{ex:counterexample}
  We work in the setting of exchangeable random graphs, similarly as
  in~\cite{Cra16}. That is $X_{ij}(t)$ denotes the adjacency matrix of a
  random exchangeable graph, which can thus be viewed as $\{0,1\}$-valued
  weakly exchangeable array with zeros on the diagonal. We fix
  $T=\{0,1,\dots,N\}$ for a large~$N$.

  To construct the process, let $\xi_i$, $i\in \mathbb N$, be
  i.i.d.~Bernoulli($\tfrac 12$) random variables. In the initial
  configuration $X(0)$, we draw an edge between vertices $i\neq j$ (i.e., we
    set $X_{ij}(0)=1$) with probability $p_{ij}(\xi )$, where
  \begin{equation}
    p_{ij}(\xi)= \begin{cases}
      \tfrac14,\qquad &\text{if }\xi_i=\xi_j=0,\\
      \tfrac12,\qquad &\text{if }\xi_i\neq\xi_j,\\
      \tfrac34,\qquad &\text{if }\xi_i=\xi_j=1.
    \end{cases}
  \end{equation}
  All edges are drawn independently.

  To define the dynamics, for every $x\in \mathbb S$, we define
  \begin{equation}
    \label{e:xis}
    \xi_i(x) = \bbone\Big\{\limsup_{n \to \infty} \frac 1n \sum_{j=1}^n x_{ij}>\tfrac
      12\Big\}.
  \end{equation}
  Given the configuration of $X$ at time $t$, we construct $X(t+1)$ as
  follows
  \begin{itemize}
    \item If $\xi_i(X(t)) = \xi_j(X(t)) = 0$, then $X_{ij}$ does not change,
    that is $X_{ij}(t+1)=X_{ij}(t)$.
    \item Otherwise, $X_{ij}$ is refreshed according to $p_{ij}(X(t))$,
    that is $X_{ij}(t)$ is a Bernoulli($p_{ij}(\xi (X(t))$) random
    variable, chosen independently of all other $X_{ij}(t)$'s.
  \end{itemize}

  It is easy to see that the process $X$ is weakly exchangeable. And, by
  construction, it is obviously Markov. In addition, the law of large
  numbers implies that $\xi_i (X(0))=\xi_i$ a.s., and thus $X(1)$, and
  inductively also $X(t)$, $t\ge 1$, have the same distribution as $X(0)$.

  The exchangeable $\sigma $-field $\mathcal E_X$ is $P$-trivial in this
  example, since $X$ is dissociated by construction. Hence, conditioning on
  $\mathcal E_X$ does not have any effect.

  On the other hand, the functions $\xi_i(X(t)) $ cannot be determined
  from any finite restriction
  $X(t)|_{[n]}$. That is, $\xi$'s are ``hidden variables'' for the restriction
  $X|_{[n]}$, and while
  conditionally on $\xi $, $X|_{[n]}$ is Markov, it is not Markov
  unconditionally.

  To prove this, fix $n=2$, that is consider only the state
  of the edge connecting the vertices $1$ and $2$. Then, by an easy
  computation taking into account all possible values of $\xi_1$ and
  $\xi_2$, we obtain that
  $P(X_{12}(t+1)=1 \mid X_{12}(t)=1) = \tfrac{21}{32}$. On the other hand,
  $P(X_{12}(N)=1 \mid X_{12}(t)=1, \forall t<N)$  can be made arbitrarily close
  to one by choosing $N$ large, because if we know that
  $X_{12}(t)=1$ for all $t<N$, then very likely $\xi_1=\xi_2=0$ and thus
  $X_{12}$ never flips:
  \begin{equation}
    \begin{split}
      P(X_{12}(N)=1 \mid {}&X_{12}(t)=1, \forall t<N) =
      \frac
      {P(X_{12}(N)=1, \forall t\le N) }
      {P(X_{12}(N)=1, \forall t< N) }
     \\& = \frac{\frac 14\cdot \frac 14 \cdot 1
        + \frac 12 \cdot (\frac 12)^N+\frac 14 \cdot (\frac 34)^{N}}
      {\frac 14\cdot \frac 14 \cdot 1 + \frac 12 \cdot (\frac 12)^{N-1}
        +\frac 14 \cdot (\frac 34)^{N-1}}
      \xrightarrow{N\to\infty}1.
    \end{split}
  \end{equation}

  This implies that $X_{12}$ is not Markov.
\end{example}

\begin{remark}
  \begin{enumerate}[(a)]
    \item On the technical level, the problem in \cite{Cra16} lies in the fact that
    the relation~(14) therein, which gives certain consistency for the
    kernels $q_n$, does not hold true, in general. This can hinder the
    Markov property of the finite restrictions as shown in
    Example~\ref{ex:counterexample}.

    \item However, in Section~\ref{sec:feller-property} (see
      Theorem~\ref{thm:feller-equiv-locality}), we show that under the
    additional assumption that the ``global'' Markov process $X$ has the
    Feller property (cf., Definition~\ref{def:feller-property}), all the
    ``local'' restrictions $X\vert_{[n]}$ are indeed Markov (and Feller).
    See also Remark~\ref{rem:consitency-equiv-feller}.
  \end{enumerate}
\end{remark}

\section{Jumps of continuous-time Markov processes}

We now study exchangeable Markov processes in continuous time. Similarly
as in discrete time (see Lemma~\ref{l:jumpsdiscrete}), we describe the
possible jumps of this process. The structure here is richer, because the
process is indexed by an uncountable set of times. So, certain events which
have probability $0$ in the discrete settings can occur.

\begin{theorem}
  \label{t:contjumps}
  Let $X$ be exchangeable Markov process with càdlàg paths in continuous
  time, and let $J\subset (0,\infty)$ be the (random) set of times when
  $t \mapsto X_t$ is discontinuous. Then, a.s., $J$ can be written as a disjoint
  union $J=J^1 \dotcup J^2 \dotcup J^3$, where
  \begin{itemize}
    \item $J^1$ is the set of times, where a positive proportion of
    entries of $X$ jumps,
    \begin{equation}
      J^1:=\Big\{t>0: \lim_{n\to\infty} \frac 1 {n^2}
        \sum_{1\le i,j\le n} \bbone\{X_{ij}^-(t)\neq X_{ij}(t)\}>0\Big\},
    \end{equation}
    \item $J^2$ is the set of times, where a positive proportion of entries in one
    row or column of $X$ jumps, $J^2= J^{2,c}\dotcup J^{2,r}$ with
    \begin{align}
      &\begin{aligned}
        J^{2,r}=\Big\{t>0:{}& \text{$\exists! i\in \mathbb N$ s.t.
            $X_{i'j}^-(t)=X_{i'j}(t), \forall i'\neq i,
            j\in \mathbb N$,}
        \\&\text{and $\lim_{n\to\infty} \frac 1n \sum_{j=1}^n
            \bbone\{X_{ij}^-(t)\neq X_{ij}(t)\}>0\Big\}$},\\
      \end{aligned}
      \\&
      \begin{aligned}
        J^{2,c}=\Big\{t>0:{}& \text{$\exists!j\in \mathbb N$ s.t.\!
            $X_{ij'}^-(t)=X_{ij'}(t), \forall j'\neq j, i\in \mathbb N$,}
        \\ &\text{and $\lim_{n\to\infty} \frac 1n \sum_{i=1}^n
            \bbone\{X_{ij}^-(t)\neq X_{ij}(t)\}>0\Big\}$},
      \end{aligned}
    \end{align}
    \item $J^3$ is the set of times, where a unique entry jumps,
    \begin{equation}
    \begin{aligned}
      J^3 = \big\{t>0:{}& \text{$\exists ! i,j\in \mathbb N$ s.t.
          $X_{ij}$ is
          discontinuous at $t$, and }\\
        &\text{$X_{i'j'}$ is continuous at $t$,
          $\forall (i',j')\neq (i,j)$}\big\}.
    \end{aligned}
    \end{equation}
  \end{itemize}
\end{theorem}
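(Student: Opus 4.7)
The plan is to reduce to the dissociated case by conditioning on $\mathcal E_X$ (exactly as in Theorems~\ref{t:markovprojection} and~\ref{t:cadlag}) and then apply the Aldous--Hoover theorem to $X$ regarded as an exchangeable $D(S)$-valued array. This yields $X_{ij}\laweq f(U_i,U'_j,U_{ij})$ for some measurable $f\colon[0,1]^3\to D(S)$ and independent i.i.d.\ uniform families $(U_i)_i,(U'_j)_j,(U_{ij})_{i,j}$; since the conclusion of the theorem depends only on the law of $X$, I work directly with this representation, so ``$X_{ij}$ jumps at $t$'' becomes the statement that the càdlàg path $f(U_i,U'_j,U_{ij})\in D(S)$ jumps at $t$.

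Introduce the deterministic, at most countable sets of fixed jump times
\begin{align*}
  H &= \{t : P(X_{11}\text{ jumps at }t)>0\},\\
  G^r(u) &= \{t : P(X_{11}\text{ jumps at }t\mid U_1=u)>0\},\\
  G^c(u) &= \{t : P(X_{11}\text{ jumps at }t\mid U'_1=u)>0\};
\end{align*}
their countability follows from the general theory of càdlàg laws on $D(S)$, and a Fubini argument gives $t\in H$ iff $\{u : t\in G^r(u)\}$ (equivalently $\{u : t\in G^c(u)\}$) has positive Lebesgue measure. The strategy is then to partition, almost surely, each random jump set $T_{ij}$ of $X_{ij}$ according to which of the four conditions $t\in H$, $t\in G^r(U_i)\setminus H$, $t\in G^c(U'_j)\setminus H$, or $t\notin H\cup G^r(U_i)\cup G^c(U'_j)$ holds, and to identify these four cases with $J^1$, $J^{2,r}$, $J^{2,c}$, $J^3$.

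For $t\in H$ the event $\{t\in J\}$ is $\Sigma^2$-invariant, so conditionally on it the jump-indicator array $(\bbone\{X_{ij}^-(t)\neq X_{ij}(t)\})_{i,j}$ is still exchangeable, and the dichotomy used in the proof of Lemma~\ref{l:jumpsdiscrete} forces positive density, so $t\in J^1$. For $t\in G^r(U_i)\setminus H$, the conditional strong LLN applied to the indicators $(\bbone\{X_{ij'}\text{ jumps at }t\})_{j'}$ (i.i.d.\ given $U_i$) supplies positive density of jumps in row $i$; on the other hand, after conditioning on $(U_i)_i$ the set $G^r(U_i)\setminus H$ is deterministic and countable, and for each of its elements $t$ both $\{u : t\in G^r(u)\}$ and $\{u : t\in G^c(u)\}$ are Lebesgue-null, so a countable union of null events rules out a.s.\ any other row $i'$ having $t\in G^r(U_{i'})$ or any column $j'$ having $t\in G^c(U'_{j'})$; this places $t\in J^{2,r}$, and the case $G^c(U'_j)\setminus H$ is symmetric. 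For $t$ in the last piece the same conditioning argument, combined with $t\notin H\cup G^r(U_i)\cup G^c(U'_j)$, shows no other entry $(i',j')\neq(i,j)$ jumps at $t$, placing $t\in J^3$. The main obstacle is the ``simultaneously over all random jump times'' character of the claim: since each $t\in J$ is itself random, the pointwise zero-probability statements above have to be promoted to an almost sure one, which is done by conditioning on $(U_i)_i$ and $(U'_j)_j$ and exploiting the resulting countability of $G^r(U_i),G^c(U'_j),H$. Finally, the dissociated conclusion is lifted to the general exchangeable case by integrating over $\mathcal E_X$ exactly as at the end of Theorems~\ref{t:markovprojection} and~\ref{t:cadlag}.
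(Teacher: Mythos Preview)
Your approach is correct and takes a genuinely different route from the paper. Both begin by conditioning on $\mathcal E_X$ to reduce to the dissociated case, but the paper then proceeds abstractly: it defines $J^X=\{t:P(X\text{ discontinuous at }t)>0\}$ (your $H$), identifies $J^1=J^X$ via the discrete-time dichotomy, and then---rather than invoking Aldous--Hoover---uses dissociation directly to see that $J_{ij}$ and $J_{kl}$ are independent whenever $i\neq k$, $j\neq l$, ruling out simultaneous ``off-diagonal'' jumps outside $J^X$; for the row/column structure it treats the $i$-th row $X_{i\cdot}$ as an exchangeable $D(S)$-valued \emph{sequence} and applies de~Finetti after conditioning on the row's own exchangeable $\sigma$-field $\mathcal E_i$. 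Your explicit use of the Aldous--Hoover latent variables $(U_i,U'_j,U_{ij})$ is more constructive: the sets $G^r(U_i)$, $G^c(U'_j)$ give a concrete realisation of what the paper's conditioning on $\mathcal E_i$ achieves, and the four-way partition of $T_{ij}$ makes the classification transparent. One small imprecision worth fixing: when you ``condition on $(U_i)_i$'' and then argue that $t\in G^r(U_{i'})$ is a null event, note that after conditioning on \emph{all} of the $U_i$'s the value $U_{i'}$ is already fixed, so the null-set reasoning does not apply as stated; condition instead on $U_i$ alone (or argue unconditionally via Fubini) to conclude that a.s.\ the sets $G^r(U_i)\setminus H$ are pairwise disjoint in $i$ and disjoint from every $G^c(U'_j)$, and then the rest of your argument goes through.
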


\begin{proof}
  We follow the proof of Theorem~3.6 of~\cite{Cra16}.
  By conditioning on $\mathcal E_X$, we may assume without loss of
  generality that $X$ is dissociated.%
  \footnote{Remark that the conditioning on $\mathcal E_X$ effectively
    removes the Markov property of $X$, since $\mathcal E_X$ contains
    information about the whole trajectory. In particular, information
    about certain jumps of $X$ is contained in $\mathcal E_X$.}
  Similarly as in the proof of
  Theorem~\ref{t:markovprojection}, let $J^X$ be the set of times when $X$
  jumps with positive probability,
  \begin{equation}
    J^X:= \{t>0: P(X \text{ is discontinuous at }t)>0\}.
  \end{equation}
  As we remarked previously, this set is at most countable. Hence, by
  considering the arrays $(\bbone \{X_{ij}^-(t)\neq X_{ij}(t)\})_{ij}$,
  $t\in J^X$, and
  using the same arguments as  in the proof of
  Lemma~\ref{l:jumpsdiscrete}, we obtain that $J^X\subset J^1$.

  We now consider times $t\in (0,\infty)\setminus J^X =: C^X$. We first claim
  that for such $t$, the proportion of entries that jump must be 0, that
  is $J^X \supset J^1$.
  Indeed, since $X$ is dissociated, by
  Proposition~\ref{p:dislimits}, for every $t\in (0,\infty)$,
  \begin{equation}
    P\big(X^-_{12}(t)\neq X_{12}(t)\big)
    = \lim_{n\to\infty} \frac 1
    {n^2} \sum_{1\le i,j\le n} \bbone\big\{X^-_{ij}(t)\neq X_{ij}(t)
      \big\},
  \end{equation}
  so if the right-hand side is positive, so must be the left-hand side,
  implying $t\in J^X$.

  We further claim that at $t\in C^X$ it is a.s.~impossible that two
  entries that are not in the same row or in the same column jump at the
  same time. To see this, fix $i\neq k$ and $j\neq l$ and write $J_{ij}$
  for the set of times when $X_{ij}$ jumps. Then,
  \begin{equation}
    P(J_{ij}\cap J_{kl}\cap C^X \neq \emptyset) =
    E\big[ P( J_{ij}\cap J_{kl}\cap C^X \neq \emptyset \mid J_{kl})\big].
  \end{equation}
  The set $J_{kl}$ is at most countable and $J_{ij}$ and $J_{kl}$ are
  independent because $X$ is dissociated. Therefore, the conditional
  probability in the last formula satisfies
  \begin{equation}
    \begin{split}
      P( J_{ij} \cap J_{kl}\cap C^X \neq \emptyset \mid J_{kl}) &\le
      P( J_{ij}\cap C^X \neq \emptyset \mid J_{kl})
      \\&
      = P( J_{ij}\cap C^X \neq \emptyset) = 0,
    \end{split}
  \end{equation}
  where the last equality follows from the definition of $C^X$. This
  yields the claim.

  It remains to be shown that if $X_{ij}$ jumps at $t$, then either it is
  the only entry that jumps, or that there is a positive proportion of
  entries that jump in $i$-th row or $j$-th column. To see this, it is
  sufficient to observe that $X_{i\cdot}:=(X_{ij})_{j\in \mathbb N}$ is
  an exchangeable $D(S)$-valued sequence. In general, $X_{i\cdot}$ is not
  Markov, but we do not need it to be. By conditioning on its
  exchangeable field $\mathcal E_i$ (which, in general, is not related to
    $\mathcal E_X$), $X_{i\cdot}$ is becomes an i.i.d.~sequence, by de
  Finetti's theorem. We may then repeat the arguments of the previous
  paragraphs (applied to sequences instead of arrays) to show that
  $J^{2,r}=\dotcup_{i\in \mathbb N} J^{2,r}_i$, where $J^{2,r}_i$ is the set
  of times when the row $i$ jumps with a positive probability,
  \begin{equation}
    J^{2,r}_i=
    \{t>0:P(X_{i\cdot}^-(t)\neq X_{i\cdot}(t) \mid \mathcal E_i)>0\}.
  \end{equation}
  and, out of $J^{2,r}_i$ there are no simultaneous jumps of two entries
  $X_{ij}$ and $X_{ij'}$ with $j\neq j'$, that is
  \begin{equation}
    P((J_{ij}\cap J_{ij'})\setminus J^{2,r}_i\neq \emptyset \mid \mathcal E_i)
    =0.
  \end{equation}
  This completes the proof.
\end{proof}

Inspection of the previous proof allows us to deduce the following claim about
the discontinuities of the projection $|X|$.

\begin{corollary}
  Let $J^{|X|}$ be the set of times when $t\mapsto |X(t)|$ is
  discontinuous. Then, $J^{|X|}\subset J^1$, where $J^1$ is as in
  \textup{Theorem~\ref{t:contjumps}}.
\end{corollary}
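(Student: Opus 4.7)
The plan is as follows. By Theorem~\ref{t:cadlag} the projection $|X|$ has càdlàg sample paths $P$-a.s., so $t\in J^{|X|}$ iff $\lim_{s\uparrow t}|X(s)|\neq |X(t)|$. Moreover, by inspecting the proof of Theorem~\ref{t:cadlag}---first in the dissociated case, then in general after conditioning on $\mathcal E_X$ and invoking Lemma~\ref{l:conditioning}---one has $\lim_{s\uparrow t}|X(s)| = |X^-(t)|$, where $X^-(t)=(X^-_{ij}(t))_{i,j\in\mathbb N}$ is the entrywise left-limit array. In particular $|X|$ is continuous at every $t$ where $X$ itself is continuous, hence $J^{|X|}\subset J = J^1 \cup J^2 \cup J^3$, and the task reduces to showing $|X^-(t)|=|X(t)|$ for every $t\in J^2\cup J^3$.

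The key observation is that the empirical measures $t^{y,n}_m$ from \eqref{e:tn} are insensitive, in the limit $m\to\infty$, to modifications of $y\in\mathbb S$ in finitely many rows or columns (or in a single entry). Indeed, if $y,y'\in\mathbb S$ agree off a single row $i_0$, then the sub-arrays $(y_{\psi_1(a),\psi_2(b)})_{a,b\in[n]}$ and $(y'_{\psi_1(a),\psi_2(b)})_{a,b\in[n]}$ can differ only when $i_0\in\psi_1([n])$, an event concerning at most $n(m-1)_{n-1}\cdot(m)_n$ pairs $(\psi_1,\psi_2)$ out of $((m)_n)^2$. Thus
\begin{equation*}
  \big\|t^{y,n}_m - t^{y',n}_m\big\|_{\mathrm{TV}}
  \le \frac{2\,n\,(m-1)_{n-1}\,(m)_n}{((m)_n)^2} = \frac{2n}{m}\xrightarrow{m\to\infty}0,
\end{equation*}
so the weak limits satisfy $t^{y,n}=t^{y',n}$ for every $n$, i.e.\ $|y|=|y'|$. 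The analogous bound for column changes is again $O(n/m)$, and for a single-entry change it is $O(n^2/m^2)$. By Theorem~\ref{t:contjumps}, at every $t\in J^{2,r}\cup J^{2,c}\cup J^3$ the arrays $X^-(t)$ and $X(t)$ differ respectively in exactly one row, one column, or one entry; the estimates above therefore give $|X^-(t)|=|X(t)|$, whence $t\notin J^{|X|}$.

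The main obstacle is the identification of the Skorokhod left limit of the $\mathcal D^\star_S$-valued càdlàg process $|X|$ with the projection $|X^-(t)|$ of the pointwise left-limit array. The map $|\cdot|$ is defined only through almost-sure limits of empirical measures, so a priori it is not obvious that these limits commute with taking pointwise left limits in $t$. Once this commutation is established---by reducing to the dissociated case exactly as in the proof of Theorem~\ref{t:cadlag} and then mixing via Lemma~\ref{l:conditioning}---what remains is the elementary combinatorial computation outlined above.
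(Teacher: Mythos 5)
Your argument is correct, but it reaches the conclusion by a mechanism different from the one the paper has in mind when it says ``inspection of the previous proof.'' The paper's implicit route is distributional: conditionally on $\mathcal E_X$ the process is dissociated, $|X(t)|$ coincides with the (conditional) law of $X(t)$ simultaneously for all $t$, and the left limit of $|X|$ is the law of $X^-(t)$; hence $|X|$ can only be discontinuous at $t$ if $P(X^-(t)\neq X(t))>0$, i.e.\ $t\in J^X$, and the proof of Theorem~\ref{t:contjumps} already identified $J^X$ with $J^1$. You instead argue pathwise: after extracting from the proof of Theorem~\ref{t:cadlag} the identification $\lim_{s\uparrow t}|X(s)|=|X^-(t)|$, you prove the self-contained combinatorial fact that the map $y\mapsto|y|$ is insensitive to altering a single row, column, or entry (your total-variation bound $n/m$ resp.\ $(n/m)^2$ on $\|t^{y,n}_m-t^{y',n}_m\|_{\mathrm{TV}}$ is correct, and it does give $t^{y',n}=t^{y,n}$ for all $n$, hence $|y'|=|y|\in\mathcal D_S$), and then use the classification of Theorem~\ref{t:contjumps} to conclude that $|X^-(t)|=|X(t)|$ at every $t\in J^2\cup J^3$. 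Both arguments lean on the same two ingredients (the left-limit identification from Theorem~\ref{t:cadlag} and the jump classification), but your finite-modification lemma is a genuinely different, and arguably more robust, middle step: it shows directly that single-row, single-column and single-entry jumps are invisible to the graph-limit projection, without passing through the equality $J^1=J^X$ of the fixed-discontinuity set. The only mild caveat is that you rely on statements proved inside, rather than stated by, Theorem~\ref{t:cadlag}; since you flag this explicitly and the reduction to the dissociated case via Lemma~\ref{l:conditioning} is exactly as in that proof, this is not a gap.
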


The inclusion in the previous theorem might be strict. As an example,
consider the process started from $X_{ij}(0)$ being
i.i.d.~Bernoulli($\frac 12$), where all entries are refreshed
simultaneously by an independent
i.i.d.~Bernoulli($\frac 12$) array at times of jump of a standard Poisson
process $N_t$. In this case, for every $t\ge 0$, $|X(t)|$ is the
distribution of the i.i.d.~Bernoulli($\frac 12$) array, that is
$J^{|X|}=\emptyset$. On the other hand, $J^1$ agrees with the set of
jumps of $N_t$.

\section{The Feller property}
\label{sec:feller-property}

In the last part of this paper, we discuss the conditions under which
exchangeable $\mathbb S$-valued Markov processes in continuous time have
the Feller property.

Recall the following.

\begin{definition}
  \label{def:feller-property}
  A time-homogeneous $S$-valued Markov process with transition
  kernels $p_t(\cdot,\cdot)$ is called \emph{Feller} if
  \begin{enumerate}[(a)]
    \item For every $g\in C_b(S)$, $t\ge 0$ and  $y\in S$, the map
    $x\mapsto \int g(y)p_t(x,\d y)$ is continuous.

    \item For every $x\in S$ and $g\in C_b(S)$,
    \begin{equation}
      \lim_{t\downarrow 0} \int g(y) p_t(x,\d y) = g(x).
    \end{equation}
  \end{enumerate}
\end{definition}

It is easy to construct exchangeable Markov processes that are not
Feller. E.g., the process considered in Example~\ref{ex:counterexample}
does not satisfy (a) of the Feller property. To see this, take
$g(y)=y_{12}$, $y\in \mathbb S$, and observe that for every $t>0$ there
is $\varepsilon_t>0$ such that if $x_{12}=1$, then
\begin{equation}
  \int g(y) p_t(x,\d y)
  \begin{cases}
    = 1,&\text{if $\xi_1(x)=0$ and $\xi_2(x)=0$,}\\
    < 1-\varepsilon_t, \quad &\text{otherwise.}
  \end{cases}
\end{equation}
Inspecting, the definition \eqref{e:xis} of $\xi_i(x)$, it is easy to see
that it is not continuous function of $x$, and thus $X(t)$ is not Feller.

This example indicates one possibility of how the Feller property can be
violated by exchangeable Markov processes: If the transition kernel
depends on ``non-local exchangeable quantities'', then the process is not
Feller. We now show that this is essentially the only mechanism, how the
Feller property can be violated.

The following definition imposes a very strong ``locality'' of the
distribution of $X$.

\begin{definition}
  An exchangeable Markov process $X$ is called \mindex{consistent} if its
  every restriction $X|_{[n]}$ to $\mathbb S_n$ is Markov with respect to
  its own natural filtration.
\end{definition}

\begin{theorem}
  \label{thm:feller-equiv-locality}
  For a time-homogeneous exchangeable Markov process $X$, the following are
  equivalent:
  \begin{itemize}
    \item[\textup{(i)}] $X$ is consistent and every $X|_{[n]}$ is a Feller process on
    $\mathbb S_n$.
    \item[\textup{(ii)}] $X$ is Feller.
  \end{itemize}
\end{theorem}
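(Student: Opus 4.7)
The plan is to leverage the characterization recalled in Section~\ref{s:excharrays}, that $\mu^k\to\mu$ weakly in $\mathcal P(\mathbb S)$ iff $\mu^k|_{[n]}\to\mu|_{[n]}$ weakly in $\mathcal P(\mathbb S_n)$ for every $n$. The bridging fact in both directions is the identity
\begin{equation*}
  p_t(x,\pi_n^{-1}(B))=p^{[n]}_t(x|_{[n]},B), \qquad B\in\mathcal B(\mathbb S_n),\ x\in\mathbb S,
\end{equation*}
where $\pi_n\colon\mathbb S\to\mathbb S_n$ is the canonical restriction and $p^{[n]}_t$ is the transition kernel of $X|_{[n]}$.

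For (i) $\Rightarrow$ (ii), consistency delivers this identity: the marginal law of $X|_{[n]}(t)$ started from $x$ equals both $p_t(x,\pi_n^{-1}(\cdot))$ (by the Markov property of $X$) and $p^{[n]}_t(x|_{[n]},\cdot)$ (by consistency). For $g\in C_b(\mathbb S)$ and $x^k\to x$ in $\mathbb S$, the characterization reduces Feller~(a) for $X$ to the weak convergence $p_t(x^k,\cdot)|_{[n]}\to p_t(x,\cdot)|_{[n]}$ for each $n$; via the identity this is $p^{[n]}_t(x^k|_{[n]},\cdot)\to p^{[n]}_t(x|_{[n]},\cdot)$, which holds by Feller~(a) for $X|_{[n]}$ combined with $x^k|_{[n]}\to x|_{[n]}$ in $\mathbb S_n$. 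Feller~(b) for $X$ is analogous: Feller~(b) for each $X|_{[n]}$ gives $p^{[n]}_t(x|_{[n]},\cdot)\to\delta_{x|_{[n]}}$ as $t\downarrow 0$, whence $p_t(x,\cdot)\to\delta_x$ weakly in $\mathcal P(\mathbb S)$.

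For (ii) $\Rightarrow$ (i), everything reduces to the \emph{locality} claim that $x\mapsto p_t(x,\pi_n^{-1}(B))$ depends only on $x|_{[n]}$. Once locality is established, setting $p^{[n]}_t(y,B):=p_t(x,\pi_n^{-1}(B))$ for any $x$ extending $y$ gives a well-defined transition kernel on $\mathbb S_n$; then
\begin{equation*}
  P[X|_{[n]}(t)\in B\mid\mathcal F_s]=p_{t-s}(X(s),\pi_n^{-1}(B))=p^{[n]}_{t-s}(X|_{[n]}(s),B)
\end{equation*}
by the Markov property of $X$ and locality, so $X|_{[n]}$ is Markov; its Feller property follows from Feller for $X$ applied to cylinder test functions $\tilde g(x):=g(x|_{[n]})$ with $g\in C_b(\mathbb S_n)$, since $x\mapsto \int\tilde g\,dp_t(x,\cdot)$ is continuous on $\mathbb S$ by Feller~(a) and factors through $x\mapsto x|_{[n]}$ by locality.

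The hard step is the locality claim itself. Writing $F(x):=\int\tilde g\,dp_t(x,\cdot)$, we have $F\in C_b(\mathbb S)$ by Feller~(a), and $F(x^{\boldsymbol\pi})=F(x)$ for every $\boldsymbol\pi\in\Sigma^2$ fixing $[n]$ pointwise, since exchangeability \eqref{e:markovkernelexch} combined with the $\boldsymbol\pi$-invariance of both $\tilde g$ and its defining cylinder set yields this equality. A naive orbit-closure argument fails: the product-topology closure of the $\boldsymbol\pi$-orbit of $x$ need not contain every $x'$ with $x'|_{[n]}=x|_{[n]}$. To close the gap I would reduce via the mixture decomposition of Proposition~\ref{p:mixture} to a dissociated initial distribution $\mu$, and exploit that $\mu$-a.e.~pair $x,x'$ with $x|_{[n]}=x'|_{[n]}$ shares the same dissociated limit $|x|=|x'|=\mu$, so that the Aldous--Hoover representation (Theorem~\ref{t:AH}) should allow the construction of permutations $\boldsymbol\pi^{(k)}$ fixing $[n]$ for which $x^{\boldsymbol\pi^{(k)}}\to x'$ in product topology; continuity of $F$ would then give $F(x)=F(x')$, which is all that is needed to establish the Markov property of $X|_{[n]}$ under $P$.
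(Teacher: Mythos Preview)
Your direction (i)$\Rightarrow$(ii) matches the paper's argument essentially verbatim (you even add the verification of Feller~(b), which the paper leaves implicit).

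The gap is in (ii)$\Rightarrow$(i), precisely at the locality step you flag as hard. Your proposed workaround---condition on $\mathcal E_X$ to get a dissociated law $\mu$, then argue that for $\mu$-a.e.\ pair $x,x'$ in the same $[n]$-fiber one can build permutations $\boldsymbol\pi^{(k)}\in\Sigma_{(n)}^2$ with $x^{\boldsymbol\pi^{(k)}}\to x'$---has two problems. First, the construction of such permutations from the Aldous--Hoover representation is asserted (``should allow'') but not carried out; knowing $|x|=|x'|$ does not by itself furnish the approximating permutations, and for general dissociated $\mu$ this step is not obvious. Second, and more seriously, even if it succeeds you obtain locality only $\mu$-a.e., hence a transition kernel $p^{[n]}_t(y,\cdot)$ defined only for $\mu|_{[n]}$-a.e.\ $y\in\mathbb S_n$. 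That suffices for the Markov property of $X|_{[n]}$ under $P$, but statement~(i) asks that $X|_{[n]}$ be \emph{Feller on $\mathbb S_n$}, which is a pointwise requirement on all of $\mathbb S_n$. Unless the support of $\mu|_{[n]}$ is all of $\mathbb S_n$ (which is not assumed), you cannot extend by continuity.

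The paper closes this gap with a neat trick that sidesteps the orbit-closure obstruction you identified. One does \emph{not} need the orbit of the given $x$ to be dense in the fiber $\{x':x'|_{[n]}=x|_{[n]}\}$; it is enough that \emph{some} element $\bar y$ of that fiber has dense orbit, since then both $x$ and any $x'$ lie in the closure of $\{\bar y^{\boldsymbol\pi}:\boldsymbol\pi\in\Sigma_{(n)}^2\}$, and continuity of $F$ gives $F(x)=F(\bar y)=F(x')$. To exhibit such a $\bar y$, the paper picks it randomly: take a countable dense set $U\subset S$ and a fully supported probability $\rho$ on $U$, set $\bar y|_{[n]}=x|_{[n]}$, and let the remaining entries be i.i.d.\ $\rho$. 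A Hewitt--Savage 0--1 argument then shows that every finite pattern over $U$ occurs as a sub-block of $\bar y$ a.s., so the $\Sigma_{(n)}^2$-orbit of $\bar y$ is dense in the entire fiber. This gives locality for \emph{every} $x\in\mathbb S$, and the Feller property of $X|_{[n]}$ then follows directly from that of $X$ applied to cylinder test functions, exactly as you outline.
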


\begin{proof}
  We begin with the following observation. Let
  $\mathcal B_n \subset \mathcal B(\mathbb S)$ be the $\sigma $-field
  generated by the canonical projection from $\mathbb S$ to $\mathbb S_n$.
  Then, $X$ is consistent iff its transition kernel satisfies
  \begin{equation}
    \label{e:Mconsistency}
    x\mapsto p_t(x,A) \quad\text{is $\mathcal B_n$-measurable for every
      $A\in \mathcal B_n$ and $t\ge 0$,}
  \end{equation}
  or, in the case when $S$ is finite,
  \begin{equation}
    P[X|_{[n]}(t) =y \mid X(0)=x]=
    P[X|_{[n]}(t) =y \mid X(0)=x']
  \end{equation}
  for every $t\ge 0$, $y\in \mathbb S_n$ and $x,x'\in \mathbb S$ such that
  $x|_{[n]}=x'_{[n]}$.

  (i)$\implies$(ii): Part (a) of the Feller property is equivalent to
  $x\mapsto p_t(x,\cdot)$ is continuous when $\mathcal P(\mathbb S)$ is
  endowed with the topology of weak convergence. As remarked in the
  introduction, this is equivalent to $x\mapsto \int g(y)p_t(x,\d y)$ is
  continuous for every $t\ge 0$ and every cylinder function~$g$, that is
  for every $\mathcal B_n$-measurable $g$, $n\ge 1$. Since the restriction
  $X|_{[n]}$ is Markov by assumption, denoting by $p^n_t(\cdot,\cdot)$ its
  transition kernel, for $g\in \mathcal B_n$,
  \begin{equation}
    \int_{\mathbb S} g(y)p_t(x,\d y)
    = \int_{\mathbb S_n} g(y)p^n_t(x|_{[n]},\d y)
  \end{equation}
  Since $X|_{[n]}$ is assumed to be Feller, the right-hand side is
  a continuous function of $x|_{[n]}$. Since
  $x^k\to x$ in $\mathbb S$ implies $x^k|_{[n]}\to x|_{[n]}$ in
  $\mathbb S_n$, the continuity of the left-hand side follows.

  (ii)$\implies$(i): We first show that $X$ is consistent by showing that
  it satisfies \eqref{e:Mconsistency}. This is equivalent to
  \begin{equation}
    \label{e:consb}
    \int g(y) p_t(x,\d y) =
    \int g(y) p_t(x',\d y)
  \end{equation}
  for all $x'$ with $x'|_{[n]}=x|_{[n]}$ and for all bounded continuous
  $\mathcal B_n$-measurable functions $g$.
  $X$ is assumed to be Feller, so the right-hand side is continuous in
  $x'$, so it is sufficient to verify \eqref{e:consb} for a dense set of
  $x'$ satisfying $x'|_{[n]}=x|_{[n]}$.

  To this end, we use the exchangeability \eqref{e:markovkernelexch}.
  Let
  \begin{equation}
    \Sigma_{(n)}=\{\pi \in \Sigma : \pi(i)=i, \forall i\le n\}
  \end{equation}
  be the set of permutations of $\mathbb N$ that coincide with the
  identity on $[n]$. Let $g$ be a bounded continuous $\mathcal B_n$-measurable
  function. Then, $g(y)= g(y^{\boldsymbol \pi})$ for every
  $y\in \mathbb S$ and $\boldsymbol \pi\in \Sigma_{(n)}^2$. Therefore,
  \eqref{e:markovkernelexch} implies that
  \begin{equation}
    \int g(y) p_t(x,\d y) =
    \int g(y) p_t(x^{\boldsymbol \pi},\d y), \quad
    \text{for every $\boldsymbol \pi\in \Sigma_{(n)}^2$}.
  \end{equation}
  Hence, to prove \eqref{e:consb} it suffices show that there is
  $\bar y\in \mathbb S$ with $\bar y|_{[n]}= x|_{[n]}$ such that the set
  $\{\bar y^{\boldsymbol \pi}:\boldsymbol \pi\in \Sigma_{(n)}^2\}$ is
  dense in $\{x':x'|_{[n]}= x|_{[n]}\}$.

  We construct such $\bar y$ by picking it randomly. To this end, let
  $U\subset S$ be a countable dense subset of $S$, and let $\rho $ be a
  probability measure on $U$ such that $\rho ({x})>0$ for every $x\in U$.
  Let $Y$ be a $\mathbb S$-valued random variable on some auxiliary
  probability space $(\tilde \Omega ,\tilde P)$ such that
  $Y|_{[n]}=x|_{[n]}$, $\tilde P$-a.s., and $Y_{ij}$, $i>n$ or $j>n$, are
  i.i.d.~$\rho $-distributed.

  Then, for $m\ge n$ and $z\in \mathbb S_m$ such that $z|_{[n]}=x_{[n]}$ and
  $z_{ij} \in U$ for $(i,j)\in [m]^2\setminus [n]^2$, we have
  $\tilde P (Y|_{[m]}=z)>0$. So, by the 0-1 law, there is $\tilde P$-a.s.
  $\boldsymbol \pi \in \Sigma_{(n)}^2$ such that
  $Y^{\boldsymbol \pi}|_{[m]}=z$. This implies that
  $\{Y^{\boldsymbol \pi}:\pi\in \Sigma_{(n)}^2\}$ is dense in
  $\{x':x'|_{[n]}= x|_{[n]}\}$, $P$-a.s., which is more than sufficient
  for the existence of $\bar y$ of the last paragraph. This completes the
  proof that $X$ is consistent.

  The consistency then implies that $X|_{[n]}$ is Markov with respect to
  its natural filtration for every $n\ge 1$. The Feller property of $X|_{[n]}$
  is then a direct consequence of the Feller property of $X$.
\end{proof}

\begin{remark}
\label{rem:consitency-equiv-feller}
  If $S$ is finite, then $\mathbb S_n$ is finite as well. Every càdlàg
  Markov process on a finite state space is Feller. Therefore, in this case, the
  consistency of $X$ is equivalent to Feller property. This was proved in
  the exchangeable random graph case in \cite{Cra17}.
\end{remark}

\bibliographystyle{plain}

\def\cprime{$'$}

\end{document}